\documentclass{amsart}

\usepackage{color}
\usepackage{soul}
\usepackage[pagewise]{lineno}

\usepackage{hyperref}
\hypersetup{
    colorlinks=true, 
    linktoc=all,     
    linkcolor=blue,  
}

\newcommand{\N}{\mathbb{N}}

\newcommand{\norm}[1]{\left\| #1 \right\|}
\newcommand{\abs}[1]{\left\vert #1 \right\vert}

\newcommand{\boldA}{\mathbf{A}}

\newcommand{\cstar}{$\mathrm{C}^*$}

\newcommand{\calH}{\mathcal{H}}

\theoremstyle{plain}
\newtheorem{theorem}{Theorem}[section]
\newtheorem{lemma}[theorem]{Lemma}
\newtheorem{corollary}[theorem]{Corollary}
\newtheorem{proposition}[theorem]{Proposition}

\newtheorem{question}[theorem]{Question}
\newtheorem{fact}[theorem]{Fact}

\theoremstyle{definition}
\newtheorem{definition}[theorem]{Definition}

\numberwithin{equation}{section}

\begin{document}
\title{The computable functional calculus}
\author[C. J. Eagle]{Christopher J. Eagle${}^1$} 
\address[C. J. Eagle]
{Department of Mathematics and Statistics, University of Victoria. PO BOX 1700 STN CSC, Victoria, British Columbia, Canada. V8W 2Y2}%
\email{eaglec@uvic.ca}
\urladdr{http://web.uvic.ca/~eaglec}
\thanks{${}^1$ Supported by NSERC Discovery Grant RGPIN-2021-02459}

\author[T. H. McNicholl]{Timothy H. McNicholl}
\address[T. H. McNicholl]{Iowa State University}
\email{mcnichol@iastate.edu}
\urladdr{https://faculty.sites.iastate.edu/mcnichol/}

\begin{abstract}
We show that the continuous functional calculus is computable.  As consequences we obtain the computable compactness of the spectrum of any computable normal element of a computably presented \cstar-algebra, the existence of effective approximate units for computably presented \cstar-algebras, and an effective version of the Spectral Theorem for compact operators on separable Hilbert spaces.
\end{abstract}
\subjclass[2020]{03D78,03D45,46L80,46L35}
\maketitle

\tableofcontents

\section{Introduction}\label{sec:Intro}
Given an element $a$ of a \cstar-algebra $\boldA$ and a polynomial $f$, there is a natural way of producing another element $f(a) \in \boldA$.  The continuous functional calculus, which is a fundamental tool in the study of \cstar-algebras, extends this to the case where $f$ is no longer required to be a polynomial, but just a function that is continuous on the spectrum of $a$, provided that $a$ is a normal element.

Following work of Fox \cite{fox2022computable}, there has recently been considerable activity in the study of computable \cstar-algebras (for example, \cite{Gelfand, AFPaper, UHFPaper, NonComputabilityPaper, FoxGoldbringHart.2024, Goldbring.2026, McNicholl.2025}).  In some of the arguments in these papers there has been a need for a computable version of the continuous functional calculus; that is, given a computable normal point $a$ of a presentation $\boldA^\#$ of $\boldA$, and a computable function $f$ on the spectrum of $a$, one frequently wants to know that $f(a)$ is also a computable point of $\boldA^\#$.  So far, these situations have been handled in an ad-hoc way, for example by using a power series representation for $f$ in the case where $f$ is analytic.  In this paper we give a systematic treatment of the continuous functional calculus in a computability setting, which provides a unified way to handle arguments of this kind.  Specifically, in Theorem \ref{theorem:CompFuncCalc} and Theorem \ref{theorem:ComputableCompactSpectrum} we prove:

\begin{theorem}
Suppose that $\boldA^\#$ is a computable presentation of a unital \cstar-algebra $\boldA$ and that the unit of $\boldA$ is a computable point.  Let $a$ be a normal computable point of $\boldA$.  Then $\sigma(a)$, the spectrum of $a$, is a computably compact subset of $\mathbb{C}$ and the continuous functional calculus is a computable map from $C(\sigma(a))$ to $\boldA^\#$.
\end{theorem}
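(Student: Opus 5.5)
The plan has two parts: first show that $\sigma(a)$ is computably compact, then use that to compute $f(a)$ from a name of $f$.

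\textbf{Computable compactness of $\sigma(a)$.} Since $\boldA$ is unital with computable unit, for each rational complex $\lambda$ the element $a-\lambda 1$ is uniformly computable, and so is $(a-\lambda1)^*(a-\lambda1)$. Because $a$ is normal, $a-\lambda1$ is normal, and
\[
\operatorname{dist}(\lambda,\sigma(a)) = \norm{(a-\lambda 1)^{-1}}^{-1},
\]
with the convention that the right side is $0$ when $a-\lambda1$ is not invertible. The core of this step is to show that $\lambda\mapsto \operatorname{dist}(\lambda,\sigma(a))$ is computable on rational points, uniformly in $\lambda$. Since the distance function is $1$-Lipschitz, it then extends to a computable function on $\mathbb{C}$. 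Together with the computable bound $\sigma(a)\subseteq \overline{B}(0,\norm{a}+1)$, this gives computable compactness: computable distance gives a located set, and a bounded located closed set is computably compact.

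To compute the distance I would use the identity
\[
\operatorname{dist}(\lambda,\sigma(a))^2 = \min \sigma\bigl((a-\lambda1)^*(a-\lambda1)\bigr),
\]
reducing to the positive element $b_\lambda=(a-\lambda1)^*(a-\lambda1)$ with $\sigma(b_\lambda)\subseteq[0,R]$ and $R$ computable. For positive $b$ and $t\in[0,R]$ we have $\min\sigma(b)\ge t$ if and only if $\norm{R1-b}\le R-t$. The quantity $\norm{R1-b}$ is computable from the presentation, so $\min\sigma(b) = R-\norm{R1-b}$ is computable. This trick avoids inverses altogether and is where normality (through the C*-identity) is essential.

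\textbf{Computability of $f\mapsto f(a)$.} Given a name of $f\in C(\sigma(a))$, I use the computable compactness of $\sigma(a)$ and the effective Tietze/Stone–Weierstrass machinery to search for polynomials $p(z,\bar z)$ with rational coefficients such that
\[
\sup_{z\in\sigma(a)}\abs{p(z,\bar z)-f(z)}<2^{-n}.
\]
Over a computably compact set, these suprema are computable, or at least upper semicomputable, which is enough to certify candidates. Such a $p$ exists by Stone–Weierstrass, so the search halts. The isometric property of the functional calculus gives $\norm{p(a,a^*)-f(a)}<2^{-n}$, and $p(a,a^*)$ is computable uniformly from $p$ because $a$, $a^*$ and $1$ are computable. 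These $p(a,a^*)$ therefore form a fast Cauchy name for $f(a)$.

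\textbf{Main obstacle.} I expect the hardest step to be the first part: justifying that the spectrum's distance function, and hence $\min\sigma(b_\lambda)$, is computable. This needs careful use of positivity and of how spectral information transfers between $a$ and $b_\lambda$, since for non-normal $a$ the corresponding statement fails.
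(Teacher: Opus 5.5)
Your argument is correct, but it runs in the opposite logical order from the paper's.

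\textbf{The paper's route.} The paper proves the functional-calculus part first, and with no compactness input. In the standard presentation of $C(\sigma(a))$ the generated points are the rational polynomials $p(z,\bar z)$. The map $\Phi$ sends these to the points $p(a,a^*)$, which are uniformly computable. Since $\Phi$ is an isometry, it is computable, and the standard presentation of $C(\sigma(a))$ is computable as well. Computable compactness of $\sigma(a)$ is then deduced from this. The paper cites McNicholl's theorem that a computable presentation of $C(X)$ admits a computably compact presentation of $X$ over which it is evaluative, and pushes that presentation forward along the computable function $z$.

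\textbf{Your route.} You get compactness first, directly from the formula $\operatorname{dist}(\lambda,\sigma(a))^2=R-\norm{R1_\boldA-b_\lambda}$. This buys several things. It is elementary and self-contained, with no appeal to the result on presentations of $C(X)$. It is visibly uniform in indices of $a$ and $1_\boldA$. It also gives an explicit computable distance function, hence both c.e.\ and co-c.e.\ closedness of the spectrum.

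You then use that compactness to run an effective Stone--Weierstrass search. This merges Theorem~\ref{theorem:CompFuncCalc}(3) with the Effective Weierstrass step of Corollary~\ref{corollary:ApplyingFuncCalcUnital}. If a name of $f$ is read in the standard presentation, as the paper intends, the search is unnecessary. Such a name already is a fast sequence of rational polynomials, so that half needs no compactness at all.

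\textbf{The step you flagged.} The step you call the main obstacle is routine. We have $b_\lambda=g(a)$ with $g(z)=\abs{z-\lambda}^2$, so the spectral mapping theorem gives $\sigma(b_\lambda)=\{\abs{\mu-\lambda}^2:\mu\in\sigma(a)\}$. Normality enters there, through the commutativity of $C^*(1_\boldA,a)$, not through the \cstar-identity. The formula $\min\sigma(b)=R-\norm{R1_\boldA-b}$ holds for every self-adjoint $b$ with $\norm{b}\le R$.
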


In particular, this shows that the application of a computable function on $\sigma(a)$ to $a$ produces a computable point of $\boldA^\#$ (Corollary \ref{corollary:ApplyingFuncCalcUnital}).  We also extend our results to the non-unital setting.  Some of our results generalize work of Brattka and Dillage \cite{BrattkaDillhage.2005} and of Dillhage \cite{Dillhage.2008} from the case of a single operator on a Hilbert space to the setting of \cstar-algebras.  As a sample application, we show that if $\boldA^\#$ is a computable presentation of a \cstar-algebra $\boldA$, then there is a computable sequence of $\boldA^\#$ that is an effective approximate unit for $\boldA^\#$ (Theorem \ref{thm:ApproximateUnit}).

In Section \ref{section:CompactOperators}, we study compact operators on Hilbert spaces.  Classically, compact operators on Hilbert spaces are equivalently characterized as either those operators where the image of the closed unit ball is pre-compact or those operators that can be uniformly approximated by finite-rank operators.  We describe natural computablity-theoretic versions of both properties and prove (Theorem \ref{theorem:EquivalentCompactness}) that the two are equivalent.  We conclude by applying our computable functional calculus to the \cstar-algebra of compact operators to obtain an effective version of the Spectral Theorem (Theorem \ref{thm:Spectral}).

\section{Preliminaries}\label{}
Throughout this paper, all \cstar-algebras are assumed to be separable.  The setting we use to study the computability of \cstar-algebras, Hilbert spaces, and related structures is the one presented in \cite{EMST}, and we refer the reader there for a detailed introduction.  The necessary background can also be found in \cite{UHFPaper}.

When we consider the \cstar-algebra $\mathbb{C}$, we always view it as having its standard presentation where the only special point is $1$ (and thus the generated points are $\mathbb{Q}(i)$).  We identify $\mathbb{C}$ with this presentation throughout the paper.  To avoid repetition, \emph{throughout the paper $\boldA$ is a \cstar-algebra and $\boldA^\#$ is a fixed computable presentation of $\boldA$}.  

Our \cstar-algebra $\boldA$ may or may not be unital.  Even if it is unital, we do not have a guarantee that the unit will be a computable point of $\boldA^\#$.

\begin{definition}
Suppose that $\boldA$ is unital.  We say that $\boldA^\#$ is \emph{computably unital} if the unit $1_\boldA$ is a computable point of $\boldA^\#$.
\end{definition}

We note that every presentation of a commutative unital \cstar-algebra is computably unital \cite[Theorem 4.1]{Gelfand}, but the proof is necessarily non-uniform \cite[Theorem 5.3]{McNicholl.2025}.  More generally, every presentation of a stably finite \cstar-algebra is computably unital \cite[Proposition 4.5]{UHFPaper}.

Recall that the \emph{unitization} of a \cstar-algebra $\boldA$ (where $\boldA$ may or may not already be unital) is the unique \cstar-algebra $\boldA_1$ such that $\boldA$ is a closed two-sided ideal of $\boldA_1$ and $\boldA_1/\boldA \cong \mathbb{C}$.  Concretely, $\boldA_1 = \boldA \oplus \mathbb{C}$ as a vector space, with multiplication $(a, \lambda)\cdot(b, \mu) = (ab+\lambda b + \mu a, \lambda\mu)$ and involution $(a, \lambda)^* = (a^*, \overline{\lambda})$.  The unique norm making this *-algebra into a \cstar-algebra is given by
\[\norm{(a, \lambda)}_{\boldA_1} = \sup\{\norm{ab+\lambda b}_{\boldA} : b \in \boldA, \norm{b}_\boldA \leq 1\}.\]
The unit of $\boldA_1$ is $(0, 1)$.

The following fact was proved in \cite[Proposition 2.21]{UHFPaper}.

\begin{fact}[Computable unitization]\label{prop:CompUnit}
There is a presentation $\boldA^\#_1$ of the unitization $\boldA_1$ of $\boldA$ such that $\boldA^\#_1$ is computable and computably unital and the inclusion map is a computable map from $\boldA^\#$ to $\boldA_1^\#$.  Furthermore, an index of $\boldA^\#_1$ can be computed from an index of $\boldA^\#$.
\end{fact}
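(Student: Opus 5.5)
The plan is to build $\boldA^\#_1$ by hand. Its special points will be those of $\boldA^\#$ together with one new special point $e$ standing for the unit $(0,1)$. The work is then to show the resulting presentation is computable, meaning the norm is computable on rational points. In the framework of \cite{EMST}, a rational point of $\boldA^\#_1$ is a rational $*$-polynomial expression in the special points. Using the relations $e^* = e$, $e^2 = e$ and $ex = xe = x$, every such expression can be rewritten, uniformly and effectively, as a formal pair $(p, \lambda)$. Here $p$ is a rational point of $\boldA^\#$ and $\lambda \in \mathbb{Q}(i)$. The unit $(0,1)$ is then a special, hence computable, point, and the inclusion $a \mapsto (a,0)$ sends rational points to rational points. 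So computable unitality and computability of the inclusion are immediate once the presentation itself is computable.

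The main step, and the expected obstacle, is computing $\norm{(p,\lambda)}_{\boldA_1}$ from $p$ and $\lambda$ using only a name for $\boldA^\#$. I will work with the quantity
\[ N(p,\lambda) = \sup\{\norm{pb+\lambda b}_{\boldA} : b \in \boldA, \norm{b}\le 1\}. \]
This is the given formula for the norm, and it is correct whether or not $\boldA$ is unital, since the $\boldA_1$-norm restricted to $\boldA$ is faithful.

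Left-c.e.-ness is straightforward. The rational points $b$ of $\boldA^\#$ are dense. Hence the supremum may be taken over rational $b$ with $\norm{b} < 1$, and for such $b$ both the condition and the value $\norm{pb+\lambda b}$ are computable from $\boldA^\#$. So lower approximations to $N(p,\lambda)$ can be enumerated.

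Upper approximations need a bound that a sup over a non-compact set does not give directly. Here I use the \cstar-identity in $\boldA_1$:
\[ \norm{x}^2 = \norm{x^*x}, \qquad x^*x = (q,\mu) \text{ with } q = p^*p + \bar\lambda p + \lambda p^*,\ \mu = |\lambda|^2. \]
The element $x^*x$ is self-adjoint, so its norm is its spectral radius. Iterating the \cstar-identity therefore gives
\[ \norm{x}^{2^{n+1}} = \norm{(x^*x)^{2^n}}, \]
and $(x^*x)^{2^n}$ is again a formal pair $(q_n, \mu^{2^n})$ with $q_n$ rational. This identity alone only reduces the problem to a similar one, so I need an upper bound that is computable. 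I will use
\[ \norm{(q,\mu)} \le \norm{q} + |\mu|, \]
together with the estimate above. These give computable upper bounds, but they need not converge.

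A better route to upper bounds is Gelfand–Naimark. For self-adjoint $(q,\mu)$, I plan to show
\[ \norm{(q,\mu)} = \max\bigl(|\mu|,\ \sup_{\norm{b}\le1} \norm{b^*qb + \mu b^*b}\bigr). \]
Even so, a sup is again only left-c.e. Consequently the upper approximation may have to come from a different characterization. The one I would try is:
\[ \norm{(q,\mu)} < r \iff |\mu| < r \text{ and } \norm{q - \text{(something)}} \ldots \]
Concretely, I would try $\norm{(q,\mu)} \le r$ if and only if $r^2 - (\mu + q)^*(\mu+q)$ is positive in $\boldA_1$. For this I would look for a c.e. certificate of positivity: the existence of a rational $y$ with $\norm{(r^2 - x^*x) - y^*y}$ small, where the norm is estimated via the crude bound. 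This gives a co-c.e. description of $N$, and combining it with the lower approximations yields computability.

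Finally, uniformity follows because every step above is effective in an index of $\boldA^\#$. I expect the upper-approximation step to be where the real difficulty lies, and I am unsure the certificate argument goes through without further input. So I would fall back on citing \cite[Proposition 2.21]{UHFPaper} for that step if needed.
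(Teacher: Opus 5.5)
Your set-up is sound: adjoining a special point $e$ for $(0,1)$, rewriting each rational point as a pair $(p,\lambda)$, and reading off density, computability of the unit, and computability of the inclusion. But there is a genuine gap. The one step with real content, approximating $\norm{(p,\lambda)}_{\boldA_1}$ from above, is never carried out. The Gelfand--Naimark formula is again a supremum, the certificate idea is left vague, and you end by falling back on \cite[Proposition 2.21]{UHFPaper}. That citation is all the paper itself offers here, so it cannot stand in for the step.

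The missing idea is already on your page. Your claim that the crude bounds ``need not converge'' is false once they are applied to $2^n$-th powers of a self-adjoint element. Take $y=(p,\lambda)$, $x=y^*y=(q,\mu)$ and $x^{2^n}=(q_n,\mu^{2^n})$, where $q_n$ is a rational point of $\boldA^\#$ computable from $p,\lambda,n$. The character $(a,\lambda)\mapsto\lambda$ is contractive, so $\abs{\mu}\le\norm{x}$. Also $\norm{q_n}=\norm{x^{2^n}-\mu^{2^n}1_{\boldA_1}}\le\norm{x}^{2^n}+\abs{\mu}^{2^n}$. Combined with $\norm{x^{2^n}}=\norm{x}^{2^n}$, this gives
\[
\norm{x}^{2^n}\le\norm{q_n}+\abs{\mu}^{2^n}\le 3\norm{x}^{2^n}.
\]
Hence $U_n=(\norm{q_n}+\abs{\mu}^{2^n})^{2^{-n}}$ is computable uniformly in $n$ and an index of $\boldA^\#$. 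It satisfies $0\le U_n-\norm{x}\le(3^{2^{-n}}-1)U_0$, so it computes $\norm{x}$, and then $\norm{y}=\norm{x}^{1/2}$; no lower approximations are needed.

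Your certificate idea can also be repaired, but only with slack. A single approximate square $v^*v\approx r^2-y^*y$ does not certify positivity. Instead, put $z=(q',\mu')=r^21_{\boldA_1}-y^*y-v^*v$. Then $\norm{q'}+\abs{\mu'}<\delta$ does certify $\norm{y}^2<r^2+\delta$. Such a rational $v$ exists for every $\delta>0$ whenever $\norm{y}<r$, because $\norm{z}\le\norm{q'}+\abs{\mu'}\le 3\norm{z}$.

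Separately, your assertion that $N(p,\lambda)$ equals the $\boldA_1$-norm whether or not $\boldA$ is unital is wrong. The displayed formula in the preliminaries is valid only for non-unital $\boldA$. The $\boldA_1$-norm restricting to a norm on $\boldA$ does not make left multiplication of $\boldA_1$ on $\boldA$ faithful. If $\boldA$ is unital, then $(-1_\boldA,1)$ annihilates $\boldA$, so $N(-1_\boldA,1)=0$ while $\norm{(-1_\boldA,1)}_{\boldA_1}=1$. The correct uniform formula is $\norm{(p,\lambda)}_{\boldA_1}=\max(\abs{\lambda},N(p,\lambda))$, so as written your lower approximations can converge to the wrong value. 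The power estimate above avoids this entirely and needs no case split on whether $\boldA$ is unital. That is exactly what the final clause of the statement requires: an index of $\boldA_1^\#$ computed uniformly from an index of $\boldA^\#$.
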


\begin{definition}
Suppose that $\boldA$ is unital. The \emph{spectrum} of an element $a \in \boldA$ is
\[\sigma_\boldA(a) = \{\lambda \in \mathbb{C} : a - \lambda1_\boldA \text{ is not invertible}\}.\]
If $\boldA$ is non-unital, then by definition $\sigma_\boldA(a) = \sigma_{\boldA_1}(a)$.
\end{definition}

When the \cstar-algebra $\boldA$ is clear from context we omit it from the notation for the spectrum.  Note, however, that if $\boldA$ is unital then it may be the case that $\sigma_{\boldA}(a) \neq \sigma_{\boldA_1}(a)$, because the unit of $\boldA$ is not the same as the unit of $\boldA_1$.  In fact, if $a \in \boldA$ then $a$ cannot be invertible in $\boldA_1$, so $0 \in \sigma_{\boldA_1}(a)$, regardless of whether or not $0 \in \sigma_\boldA(a)$.  This is the only possible difference between the spectra, so the relationship between $\sigma_{\boldA}(a)$ and $\sigma_{\boldA_1}(a)$ when $\boldA$ is unital is that $\sigma_{\boldA_1}(a) = \sigma_{\boldA}(a) \cup \{0\}$; see \cite[paragraph after Remark 2.2.1]{Murphy.1990}

In Section \ref{section:CompactOperators} we will consider operators on Hilbert spaces.  The following well-known fact was shown, in slightly different terminology, in \cite[Lemma 7]{PourElRichards.1989}; for the version in the terminology we use in this paper, see \cite[Lemma 3.1]{BrattkaYoshikawa}.

\begin{fact}\label{fact:ONB}
Every computable presentation of a Hilbert space has a computable orthonormal basis.
\end{fact}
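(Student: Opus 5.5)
The plan is an effective Gram--Schmidt process. The difficulty is that linear dependence among the special points is not decidable, so we first computably extract a linearly independent sequence with dense span and only then orthonormalize. Throughout, let $\calH^\#$ be a computable presentation of a Hilbert space $\calH$, and let $(v_k)_{k\in\N}$ be a computable enumeration of its rational points, i.e.\ the rational linear combinations of special points. This sequence is dense, and norms and inner products of its terms are uniformly computable reals.

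\emph{Step 1: building an independent sequence.} By recursion on $k$ we build a finite list $w_1,\dots,w_{m_k}$ of rational points that are linearly independent and satisfy $d(v_j,\mathrm{span}\{w_1,\dots,w_{m_k}\})<2^{-j}$ for all $j\le k$. The key observation is that linear independence of rational points $u_1,\dots,u_n$ is a c.e.\ condition: it holds exactly when the Gram determinant $\det(\langle u_i,u_l\rangle)$ is strictly positive, and this determinant is a computable real. Likewise, $d(v_k,\mathrm{span}\{w_1,\dots,w_m\})$ is computable from the Gram matrix, since this matrix is invertible and we know it. At stage $k$ we dovetail two searches:
\begin{enumerate}
\item[(a)] verify that $d(v_k,\mathrm{span}\{w_1,\dots,w_m\})<2^{-k}$, in which case we add nothing;
\item[(b)] find a rational point $u$ with $\norm{u-v_k}<2^{-k}$ such that $w_1,\dots,w_m,u$ are independent, in which case we append $u$.
\end{enumerate}
One of the searches must halt. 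If $v_k$ lies in the span, then the distance is $0$ and (a) succeeds. Otherwise the open ball of radius $2^{-k}$ about $v_k$ is not contained in the finite-dimensional span, and by density of the rational points it contains a rational point outside the span, so (b) succeeds. The resulting sequence $(w_m)$ is computable, linearly independent, and has dense span, since each $v_k$ is within $2^{-k}$ of it.

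\emph{Step 2: Gram--Schmidt.} Apply Gram--Schmidt to $(w_m)$. Each $e_m$ is $w_m$ minus its projection onto $\mathrm{span}\{e_1,\dots,e_{m-1}\}$, divided by the norm of the result. That norm is a computable real which we know to be nonzero by independence, so we can compute its reciprocal. Hence $(e_m)$ is a computable orthonormal sequence, and its span equals the span of $(w_m)$, which is dense. So $(e_m)$ is an orthonormal basis.

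\emph{Finite dimension.} If $\calH$ is finite-dimensional, the construction still yields a finite computable basis, but we cannot in general compute when it is complete. We handle this nonuniformly: we fix the finite list once enough $w$'s have appeared. This is acceptable, since the statement is nonuniform.

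I expect Step~1 to be the main obstacle, specifically arranging that the dovetailed search always terminates. The perturbation trick in (b), which replaces possibly dependent special points by nearby independent rational points, is what makes this work.
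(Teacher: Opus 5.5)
Your proof is correct, and it is essentially the standard effective Gram--Schmidt argument: semi-decide linear independence through positive Gram determinants, perturb to nearby independent rational vectors, then orthonormalize. The paper gives no proof of its own and defers to Pour-El--Richards (Lemma 7) and Brattka--Yoshikawa (Lemma 3.1); the only steps you leave implicit are routine. First, the span of finitely many $w$'s is closed, so $B(v_k;2^{-k})$ minus that span is a nonempty open set whenever $v_k$ lies outside it. Second, every ball contains $v_k$ for arbitrarily large $k$, and this is what turns ``$d(v_k,\mathrm{span})<2^{-k}$ for all $k$'' into density of the span, and hence into infinitely many appended $w$'s when $\dim\mathcal{H}=\infty$.
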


We also note that an orthonormal basis is always c.e. closed.  This follows immediately from a more general fact about computable sequences in metric spaces.

\begin{lemma}\label{lem:ONB.ce.closed}
Let $X^\#$ be a presentation of a metric space $X$ and let $(x_n)_{n \in \mathbb{N}}$ be a computable sequence of $X^\#$ such that $\{x_n : n \in \mathbb{N}\}$ is closed and discrete.  Then $\{x_n : n \in \mathbb{N}\}$ is c.e. closed.  In particular, every computable orthonormal basis of a Hilbert space is a c.e. closed set.
\end{lemma}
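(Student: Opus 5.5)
We need to show that $C=\{x_n : n\in\N\}$ is c.e. closed. By definition this means that the set of pairs $(p,r)$, with $p$ a rational point of $X^\#$ and $r\in\mathbb{Q}^+$, such that the open ball $B(p,r)$ meets $C$, is c.e. uniformly. Equivalently, we must effectively enumerate all rational open balls that meet $C$. The plan is to dovetail over $n$, over rational points $p$, over rationals $r>0$, and over precision stages $k$. For each $x_n$ we use its computable name to obtain a rational point $q$ with $d(q,x_n)<2^{-k}$; we then use computability of the metric on rational points to compute $d(p,q)$ to within $2^{-k}$. We enumerate $(p,r)$ as soon as we see some $n,k$ for which the computed upper bound on $d(p,q)$, plus $2^{-k}$, is strictly less than $r$.

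For soundness: if $(p,r)$ is enumerated, then $d(p,x_n)\le d(p,q)+d(q,x_n)<r$, so $x_n\in B(p,r)\cap C$. For completeness: if $B(p,r)\cap C\neq\emptyset$, pick $x_n$ in it. Then $d(p,x_n)<r$, and for large enough $k$ the approximations certify $d(p,x_n)<r$, so $(p,r)$ is eventually enumerated. The hypothesis that $C$ is closed is needed only so that $C$ is a closed set at all. We then have $\overline{\{x_n\}}=C$, and the enumeration describes exactly the balls meeting this closed set. In fact the argument shows that the closure of the range of any computable sequence is c.e. closed, so discreteness is not needed for this part.

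For the ``in particular'' clause, let $(e_n)$ be a computable orthonormal basis. Since $\norm{e_n-e_m}=\sqrt2$ for $n\neq m$, the set $\{e_n\}$ is uniformly discrete. Any Cauchy sequence from it is eventually constant, so the set is closed, and the first part applies.

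The main obstacle is only matching the paper's precise definition of c.e. closed: whether it asks for enumerating rational open balls that meet the set, or for a computable dense sequence. If it is the latter, $(x_n)$ itself is a computable sequence dense in $C$, which already settles the claim.
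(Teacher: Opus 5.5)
Your proposal is correct and is essentially the paper's argument. Like the paper, you dovetail over the points of the sequence, the rational balls, and the precision levels, enumerate a ball once an approximation to $d(x_n,p)$ certifies $d(x_n,p)<r$, and handle orthonormal bases via $\norm{e_n-e_m}=\sqrt{2}$. Your remark that discreteness is never used is accurate, since the paper's proof does not use it either. Note also that both your argument and the paper's tacitly require $X^\#$ to be a \emph{computable} presentation, because otherwise distances to rational points cannot in general be approximated.
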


\begin{proof}
Let $C = \{x_n : n \in \mathbb{N}\}$.  We describe an algorithm for enumerating the rational open balls that intersect $C$.

Enumerate the rational open balls of $X^\#$ as $(B(a_k; r_k))_{k \in \mathbb{N}}$.  At stage $s$, for each $i, j \leq s$, calculate a rational approximation to the distance between $x_i$ and $a_j$ to within an error of $2^{-s}$, say the result is $d^s_{i,j}$.  Since $d^s_{i, j}$, $2^{-s}$, and $r_j$ are all rational, we can test if $d^s_{i,j}+2^{-s} < r_j$.  If so, add $B(a_j; r_j)$ to the output.  Note that if $x_i \in B(a_j; r_j)$ for some $i$ and $j$ then for sufficiently large $s$ we will have $r_j - d(x_i, a_j) < 2^{-s}$, so we will have $d^s_{i,j}+2^{-s} < r_j$ and $B(a_j; r_j)$ will be enumerated.  On the other hand, if $x_i \not\in B(a_j; r_j)$ for all $i$ then we will never have $d^s_{i,j}+2^{-s} < r_j$, so $B(a_j; r_j)$ will not be enumerated.

The claim about orthonormal bases follows form the fact that if $(e_n)_{n \in \mathbb{N}}$ is an orthonormal basis of a Hilbert space then $\norm{e_n - e_m} = \sqrt{2}$ for all $n \neq m$.
\end{proof}

\section{Continuous functional calculus}
Recall that an element of $\boldA$ is \emph{normal} if $aa^* = a^*a$.  For normal elements of \cstar-algebras we have the continuous functional calculus:

\begin{fact}[Continuous functional calculus]
Suppose that $\boldA$ is unital.  Given a normal element $a \in \boldA$, there is a unique *-homomorphism $\Phi : C(\sigma(a)) \to \boldA$ satisfying $\Phi(1) = 1_\boldA$ and $\Phi(z) = a$, where $1$ is the constant function $1$ and $z : \sigma(a) \to \mathbb{C}$ is the identity function.  The map $\Phi$ is a *-isomorphism from $C(\sigma(a))$ onto $C^*(1_\boldA, a)$.
\end{fact}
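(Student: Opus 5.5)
This is the classical continuous functional calculus, and I would prove it with commutative Gelfand theory. The plan is to show that $\Phi$ is the inverse of a Gelfand transform.

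Let $\mathbf{B} = C^*(1_\boldA, a)$ be the closed $*$-subalgebra generated by $1_\boldA$ and $a$. Since $a$ is normal, the $*$-polynomials $p(a, a^*)$ commute with one another, so $\mathbf{B}$ is a commutative unital \cstar-algebra. Let $\Omega(\mathbf{B})$ be its space of characters, that is, nonzero multiplicative linear functionals, with the weak$^*$ topology.

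First I would show that the map $\omega \mapsto \omega(a)$ is a homeomorphism from $\Omega(\mathbf{B})$ onto $\sigma_{\mathbf{B}}(a)$.
\begin{itemize}
\item It is continuous by definition of the weak$^*$ topology.
\item It is injective because a character on $\mathbf{B}$ is determined by its value on $a$. Characters on a \cstar-algebra are automatically $*$-preserving, so $\omega(a^*) = \overline{\omega(a)}$, and they agree on a dense subalgebra, hence everywhere.
\item It is onto $\sigma_{\mathbf{B}}(a)$ by the standard Banach algebra fact: $\lambda \in \sigma_{\mathbf{B}}(a)$ if and only if $a - \lambda$ lies in a maximal ideal, and maximal ideals are kernels of characters.
\item It is a homeomorphism because a continuous bijection from a compact space to a Hausdorff space is one.
\end{itemize}
Next I would invoke spectral permanence: $\sigma_{\mathbf{B}}(a) = \sigma_\boldA(a)$ for \cstar-subalgebras containing the unit. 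The proof reduces to self-adjoint elements, whose spectrum is real, so every spectral point is a boundary point, and boundary points of the spectrum are spectral in any subalgebra. Applying this to $x^*x$ transfers invertibility of $x$ from $\boldA$ to $\mathbf{B}$. This identifies $C(\Omega(\mathbf{B}))$ with $C(\sigma(a))$.

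The main step, and the one I expect to be the real obstacle, is that the Gelfand transform $\Gamma : \mathbf{B} \to C(\sigma(a))$, given by $\Gamma(b)(\omega(a)) = \omega(b)$, is an isometric $*$-isomorphism. This is the commutative Gelfand--Naimark theorem.
\begin{itemize}
\item For isometry, $\norm{\Gamma(b)}_\infty$ is the spectral radius $r(b)$. For self-adjoint $h$, the \cstar-identity gives $\norm{h^{2^n}} = \norm{h}^{2^n}$, so Gelfand's formula yields $r(h) = \norm{h}$. For general $b \in \mathbf{B}$, commutativity gives $\norm{b}^2 = \norm{b^*b} = r(b^*b) \le r(b)^2$.
\item Since $\Gamma$ is a $*$-homomorphism, its image is a closed self-adjoint subalgebra of $C(\sigma(a))$. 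It contains $1$ and separates points, because $\Gamma(a) = z$.
\item By Stone--Weierstrass the image is all of $C(\sigma(a))$.
\end{itemize}
Setting $\Phi = \Gamma^{-1}$ then gives a $*$-isomorphism onto $C^*(1_\boldA, a)$ with $\Phi(1) = 1_\boldA$ and $\Phi(z) = a$.

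Uniqueness is straightforward. Any $*$-homomorphism between \cstar-algebras is contractive, hence continuous. Two such maps with $\Phi(1) = 1_\boldA$ and $\Phi(z) = a$ agree on all polynomials in $z$ and $\bar z$. These polynomials are dense in $C(\sigma(a))$ by Stone--Weierstrass, so the two maps agree everywhere.
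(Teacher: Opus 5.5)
Your proposal is correct and is essentially the standard argument the paper relies on (the paper gives no proof of its own and defers to \cite[Section 2.1]{Murphy.1990}): you use the Gelfand representation of the commutative algebra $C^*(1_\boldA, a)$, identify its character space with $\sigma(a)$ via $\omega \mapsto \omega(a)$ together with spectral permanence, apply Stone--Weierstrass for surjectivity, and get uniqueness from automatic continuity of $*$-homomorphisms plus density of polynomials in $z, \bar z$. Two small points to tighten: the image of $\Gamma$ is closed because $\Gamma$ is isometric, not merely because it is a $*$-homomorphism, and your bound $r(b^*b) \le r(b)^2$ uses $r(b^*) = r(b)$ in addition to $r(xy) \le r(x)r(y)$ for commuting elements.
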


We refer the reader to \cite[Section 2.1]{Murphy.1990} for an exposition of the continuous functional calculus and a proof of the fact above.  Given a continuous function $f : \sigma(a) \to \mathbb{C}$, we often write $f(a)$ instead of $\Phi(f)$.

Our goal in this section is to obtain a computable version of the continuous functional calculus and some of its consequences.  

\begin{definition}
Given a normal element $a$ in a unital \cstar-algebra $\boldA$, the \emph{standard presentation} of $C^*(1_\boldA, a)$ is the presentation that has $1_\boldA$ and $a$ as its special points.

Given a compact set $K \subseteq \mathbb{C}$, the \emph{standard presentation} of $C(K)$ is the presentation whose special points are the constant function $1$ and the identity function $z : K \to \mathbb{C}$.
\end{definition}

It follows from the Weierstrass approximation theorem that the standard presentation of $C(K)$ is, in fact, a presentation.  Whenever we consider computability properties of $C^*(1_\boldA, a)$ or $C(\sigma(a))$, we use these standard presentations unless we explicitly state otherwise.

\begin{theorem}\label{theorem:CompFuncCalc}
Suppose that $\boldA$ is unital, $\boldA^\#$ is computably unital, and that $a \in \boldA$ is a normal computable point of $\boldA^\#$.  Let $\Phi : C(\sigma(a)) \to C^*(1_\boldA, a)$ be the continuous functional calculus map.  Then:
\begin{enumerate}
\item{$\Phi$ is a computable *-isomorphism from the standard presentation of $C(\sigma(a))$ to the standard presentation of $C^*(1_\boldA, a)$.}
\item{The standard presentation of $C(\sigma(a))$ is computable.}
\item{$\Phi$ is a computable map from the standard presentation of $C(\sigma(a))$ to $\boldA^\#$.}
\end{enumerate}
\end{theorem}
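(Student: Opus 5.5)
The plan is to exploit the fact that $\Phi$ is an isometric *-isomorphism which sends the special points of one standard presentation to the special points of the other, so all norm computations can be carried out inside $\boldA^\#$. First I fix notation for the rational points. In the standard presentation of $C(\sigma(a))$, the rational points are obtained from the special points $1$ and $z$ by finitely many applications of the *-algebra operations over $\mathbb{Q}(i)$. Concretely, each rational point is given by a formal term, namely a polynomial $p(z,\bar z)$ with coefficients in $\mathbb{Q}(i)$. Likewise, the rational points of the standard presentation of $C^*(1_\boldA,a)$ are given by the same formal terms evaluated at $1_\boldA$ and $a$, that is, $p(a,a^*)$. Since $\Phi$ is a unital *-homomorphism with $\Phi(z)=a$, an induction on terms gives $\Phi(p(z,\bar z)) = p(a,a^*)$. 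Thus $\Phi$ carries the rational point with a given index to the rational point with the same index.

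Next I prove (2) together with the computability of the standard presentation of $C^*(1_\boldA,a)$. Because $\boldA^\#$ is computably unital and $a$ is a computable point, both $1_\boldA$ and $a$ are computable points of $\boldA^\#$. The operations of $\boldA^\#$ (sum, product, adjoint, scalar multiplication by $\mathbb{Q}(i)$) are computable in a computable presentation. Hence from a term $p$ we can uniformly compute a name of $p(a,a^*)$ as a point of $\boldA^\#$, and then its norm, since the norm is computable on $\boldA^\#$. This shows that the norm on rational points of $C^*(1_\boldA,a)$ is computable, uniformly in the term, so that presentation is computable. Because $\Phi$ is an injective *-homomorphism between \cstar-algebras, it is isometric. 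Therefore $\norm{p}_{C(\sigma(a))} = \norm{p(a,a^*)}_\boldA$, and the same algorithm computes the norm on rational points of $C(\sigma(a))$, which gives (2).

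For (1), $\Phi$ maps each rational point to a rational point, computably in the index, and it is an isometry. Hence, given a Cauchy name $(p_n)$ of $f\in C(\sigma(a))$ with $\norm{f-p_n}\le 2^{-n}$, the sequence $(p_n(a,a^*))$ is a Cauchy name of $\Phi(f)$ with the same rate, so $\Phi$ is computable. Surjectivity and the *-isomorphism property are given by the classical Fact. For (3), I compose $\Phi$ with the inclusion $C^*(1_\boldA,a)\hookrightarrow \boldA$. The inclusion is computable from the standard presentation to $\boldA^\#$: by the previous paragraph, rational points $p(a,a^*)$ are uniformly computable points of $\boldA^\#$, and the inclusion is isometric, so names transfer with a $2^{-n}$ loss handled by taking approximations one step finer.

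I expect the only real obstacle to be bookkeeping rather than mathematics. One must verify that the rational points of a presentation generated by special points are exactly the $\mathbb{Q}(i)$-*-polynomial terms, and that evaluating them in $\boldA^\#$ is uniform. This is where computable unitality is essential: without a computable name for $1_\boldA$, the constant terms of $p(a,a^*)$ could not be evaluated. I would cite the setup in \cite{EMST} for the uniform computability of the algebra operations and the norm on a computable presentation.
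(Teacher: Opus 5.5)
Your proposal is correct and follows essentially the same route as the paper. $\Phi$ sends the generated point coded by a term to the generated point coded by the same term and is isometric; the standard presentation of $C^*(1_\boldA,a)$ is computable because $1_\boldA$ and $a$ are computable points of the computable presentation $\boldA^\#$; and (3) follows by composing with the computable inclusion. The only cosmetic difference is the order: you obtain (2) directly from the isometry before proving (1), whereas the paper proves (1) first and then transfers computability across the computable *-isomorphism.
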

\begin{proof}
By definition of the standard presentations of $C(\sigma(a))$ and $C^*(1_\boldA, a)$, the map $\Phi$ is computable on the generated points of $C(\sigma(a))$.  Since $\Phi$ is a *-isomorphism, this suffices to allow us to conclude that $\Phi$ is a computable *-isomorphism, establishing (1).

Since $\boldA^\#$ is computably unital, $1_\boldA$ is a computable point of $\boldA^\#$, and by hypothesis so is $a$.  Thus, since $\boldA^\#$ is computable, the standard presentation of $C^*(1_\boldA, a)$ is computable.  By (1), the standard presentation of $C(\sigma(a))$ is computably *-isomorphic to the standard presentation of $C^*(1_\boldA, a)$, which proves (2).

For statement (3), because $1_\boldA$ and $a$ are computable points of $\boldA^\#$, the inclusion map from $C^*(1_\boldA, a)$ to $\boldA$ is a computable map from the standard presentation of $C^*(1_\boldA, a)$ to $\boldA^\#$, and thus (3) follows from (1).
\end{proof}

In order to use Theorem \ref{theorem:CompFuncCalc} to apply standard functional calculus arguments in an effective setting, we first need to to establish the computable compactness of the spectrum of a normal element.  We will make use of the following definition from \cite{McNicholl.2025}.

\begin{definition}
Let $X$ be a compact Polish space and let $X^\#$ be a presentation of $X$.  A presentation $C(X)^\#$ of $C(X)$ is \emph{evaluative over $X^\#$} if the evaluation map $C(X) \times X \to \mathbb{C}$ given by $(f,x) \mapsto f(x)$ is a computable map from the induced presentation on $C(X) \times X$ to $\mathbb{C}$.
\end{definition}

\begin{theorem}\label{theorem:ComputableCompactSpectrum}
Suppose that $\boldA$ is unital, $\boldA^\#$ is computably unital, and that $a \in \boldA$ is a normal computable point of $\boldA^\#$.  Then $\sigma(a)$ is a computably compact subset of (the standard presentation of) $\mathbb{C}$.
\end{theorem}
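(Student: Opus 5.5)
Computable compactness of a subset $K \subseteq \mathbb{C}$ amounts to two things: a computable bound on $K$, and the ability to enumerate the finite covers of $K$ by rational open balls. Equivalently, $K$ should be c.e.\ closed, bounded, and co-c.e.\ closed with a computable bound. I will split the proof into a bound, a co-c.e.\ part (complement enumeration), and a c.e.\ part (positive information), using Theorem \ref{theorem:CompFuncCalc} for the last.

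\emph{Bound.} The bound comes from $\sigma(a) \subseteq \overline{B}(0;\norm{a})$. Since $\norm{a}$ is computable from a name of $a$, we get a rational $M$ with $\sigma(a)\subseteq \overline{B}(0;M)$.

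\emph{Co-c.e.\ part.} We must enumerate the rational $\lambda$ (and balls) lying outside $\sigma(a)$. Because $a$ is normal, $a-\lambda 1_\boldA$ is normal, and $\lambda\notin\sigma(a)$ if and only if $(a-\lambda)^*(a-\lambda)$ is invertible. For normal elements, $\operatorname{dist}(\lambda,\sigma(a)) = \norm{(a-\lambda)^{-1}}^{-1}$. Exact invertibility is not directly testable, so I would instead witness membership in the complement. A closed ball $\overline{B}(\lambda;r)$ misses $\sigma(a)$ if and only if there is $b\in\boldA$ with $\norm{b(a-\lambda)-1}<1$ and $\norm{(a-\lambda)b-1}<1$ and $\norm{b}<1/r$; more precisely, one needs a near-inverse with controlled norm, since then the Neumann series gives an inverse of norm bounded by roughly $\norm b/(1-\epsilon)$, and hence a spectral-gap radius. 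Such $b$ can be searched among rational points of $\boldA^\#$, using that $1_\boldA$ is computable and norms are left-c.e.\ or computable. This gives a c.e.\ list of rational balls whose union is $\mathbb{C}\setminus\sigma(a)$. Combined with the bound, the finite-cover enumeration then follows by the standard compactness search on $\overline{B}(0;M)$.

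\emph{C.e.\ part.} We must enumerate the rational balls $B(q;r)$ meeting $\sigma(a)$. For this I use the functional calculus. Choose a computable bump function $f_{q,r}$ supported in $B(q;r)$ and positive at points of it, for instance $f(z)=\max(0,r-|z-q|)$. This $f$ is a computable point of the standard presentation of $C(\sigma(a))$: it is a uniform limit of polynomials in $z,\bar z$, which are computable uniformly by effective Stone--Weierstrass on the disc $\overline{B}(0;M)$, and the norm in $C(\sigma(a))$ is bounded by the sup over the disc. By Theorem \ref{theorem:CompFuncCalc}(3), $f(a)$ is computable in $\boldA^\#$, and since $\Phi$ is isometric, $\norm{f(a)}=\sup_{\sigma(a)}|f|$. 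Hence $B(q;r)\cap\sigma(a)\neq\emptyset$ if and only if $\norm{f(a)}>0$, which is c.e.\ because $\norm{f(a)}$ is computable uniformly in $q,r$.

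The main obstacle is the co-c.e.\ direction. In particular, I must make sure the near-inverse search yields an \emph{open} neighbourhood outside the spectrum with a quantified radius; I would use $\norm{(a-\mu)^{-1}}\le \norm{b}/(1-\norm{b(a-\lambda)-1}-|\mu-\lambda|\norm b)$. Alternatively, I can avoid this entirely: since $\sigma(a)$ is compact and c.e.\ closed with known bound, I may instead show that $\lambda\notin\sigma(a)$ is c.e.\ via the functional calculus as well. The idea is that $\operatorname{dist}(\lambda,\sigma(a)) > r$ iff $\norm{g(a)}=0$ for a bump $g$ on $\overline{B}(\lambda;r)$. That condition is only co-c.e., however, so the near-inverse approach is the one to use.
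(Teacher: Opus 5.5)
Your proof is correct, but it takes a genuinely different route from the paper. The paper's argument is three lines and outsources the work to \cite{McNicholl.2025}. Because the standard presentation of $C(\sigma(a))$ is computable (Theorem \ref{theorem:CompFuncCalc}), there is a computably compact presentation $\sigma(a)^\#$ over which $C(\sigma(a))^\#$ is evaluative. Evaluating the special point $z$ makes the identity map $\sigma(a)^\# \to \mathbb{C}$ computable, and computable images of computably compact spaces are computably compact (and c.e.\ closed, via the images of the special points of $\sigma(a)^\#$).

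You instead build the three ingredients by hand. The bound comes from $\norm{a}$. The negative information comes from a Neumann-series search for two-sided near-inverses of $a-\lambda$, which does not even need normality. The positive information comes from the isometry of $\Phi$ applied to tent functions, which you rightly approximate by polynomials on the disc $\overline{B}(0;M)$ rather than on $\sigma(a)$, so there is no circularity with Corollary \ref{corollary:ApplyingFuncCalcUnital}. The paper's route is shorter but rests on a deep external theorem. Yours is elementary, self-contained and visibly uniform, and it explicitly delivers the c.e.\ closedness that the paper later relies on in Theorem \ref{thm:Spectral}.

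Three small corrections.

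First, your initial ``if and only if'' formulation of the near-inverse criterion is false as stated. In $\mathbb{C}$ take $a=0$, $\lambda=1$, $b=-1/10$, $r=5$: both residuals equal $9/10<1$ and $\norm{b}<1/r$, yet $0\in\overline{B}(1;5)\cap\sigma(a)$. The certified radius must be $(1-\epsilon)/\norm{b}$, as in your refined estimate.

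Second, enumerability of finite covers plus a bound is equivalent to co-c.e.\ closedness plus a bound, not to the conjunction with c.e.\ closedness. Proving all three, as you do, is simply stronger.

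Third, your dismissal of a functional-calculus route for the negative half is too quick. For rational $R \geq M+\abs{\lambda}$, the function $h_\lambda(z) = R-\abs{z-\lambda}$ is nonnegative on $\sigma(a)$, so $\norm{h_\lambda(a)} = R-\operatorname{dist}(\lambda,\sigma(a))$. Hence the distance function to $\sigma(a)$ is computable, and both halves follow at once with no near-inverse search.
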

\begin{proof}
Since the standard presentation of $C(\sigma(a))$ is computable by Theorem \ref{theorem:CompFuncCalc}, it follows from \cite[Theorem 1.1 and 1.2]{McNicholl.2025} that there is a computably compact presentation $\sigma(a)^\#$ of $\sigma(a)$ over which $C(\sigma(a))^\#$ is evaluative.  The identity function $z : \sigma(a) \to \mathbb{C}$ is a special point of the standard presentation of $C(\sigma(a))$, so since this presentation is evaluative over $\sigma(a)^\#$, the identity map is computable from $\sigma(a)^\#$ to $\mathbb{C}$.  As $\sigma(a)^\#$ is computably compact and computable compactness is preserved by computable maps (see, e.g., \cite[Lemma 3.31]{DowneyMelnikov}), we have that $\sigma(a)$ is a computably compact subset of $\mathbb{C}$.
\end{proof}

We can extend this to the case where $\boldA$ is not necessarily unital and where $\boldA$ is unital but $\boldA^\#$ is not computably unital.

\begin{theorem}\label{theorem:ComputableCompactSpectrumNonunital}
Let $\boldA^\#$ be a computable presentation of a (not necessarily unital) \cstar-algebra $\boldA$.  If $a$ is a normal computable point of $\boldA^\#$ then $\sigma(a)$ is a computably compact subset of $\mathbb{C}$.
\end{theorem}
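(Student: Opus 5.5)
The plan is to reduce to Theorem \ref{theorem:ComputableCompactSpectrum} by passing to the unitization. By Fact \ref{prop:CompUnit}, there is a computable and computably unital presentation $\boldA_1^\#$ of the unitization $\boldA_1$, and the inclusion $\boldA \to \boldA_1$ is a computable map from $\boldA^\#$ to $\boldA_1^\#$. Since $a$ is a computable point of $\boldA^\#$, its image $(a,0)$ is a computable point of $\boldA_1^\#$. The inclusion is a $*$-homomorphism, so $(a,0)$ is normal in $\boldA_1$. Theorem \ref{theorem:ComputableCompactSpectrum} then applies to $\boldA_1^\#$ and $(a,0)$, and shows that $\sigma_{\boldA_1}(a)$ is a computably compact subset of $\mathbb{C}$.

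Next split into cases. If $\boldA$ is non-unital, then $\sigma(a) = \sigma_{\boldA_1}(a)$ by definition, and we are done. If $\boldA$ is unital, the intended spectrum is $\sigma_\boldA(a)$, and by the remark in the preliminaries $\sigma_{\boldA_1}(a) = \sigma_\boldA(a) \cup \{0\}$. So $\sigma_\boldA(a)$ is either $\sigma_{\boldA_1}(a)$ or $\sigma_{\boldA_1}(a)\setminus\{0\}$, according to whether $a$ is invertible in $\boldA$.

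The main obstacle is the case where $\boldA$ is unital, $a$ is invertible, and $1_\boldA$ need not be a computable point. Here we must show that removing the isolated point $0$ preserves computable compactness. The argument is non-uniform, which is allowed since we only claim that $\sigma(a)$ is computably compact.
\begin{itemize}
\item[(i)] If $a$ is invertible, then $0 \notin \sigma_\boldA(a)$. Since $\sigma_\boldA(a)$ is compact, there is a rational $r>0$ with $\sigma_\boldA(a) \cap \overline{B}(0;r) = \emptyset$.
\item[(ii)] Fix such an $r$ non-uniformly. Then $\sigma_\boldA(a) = \sigma_{\boldA_1}(a) \setminus B(0;r)$.
\item[(iii)] A computably compact set minus a rational open ball is again computably compact. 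To see this, cover $\sigma_{\boldA_1}(a)\setminus B(0;r)$ by a finite union $U$ of rational open balls together with $B(0;r)$.
\item[(iv)] This cover is c.e. in $U$, because we can enumerate finite rational covers of $\sigma_{\boldA_1}(a)$ using computable compactness. Since $\sigma_\boldA(a)$ is separated from $0$, testing that $U$ covers $\sigma_\boldA(a)$ reduces to testing that $U \cup B(0;r)$ covers $\sigma_{\boldA_1}(a)$.
\item[(v)] This works because the closed set $\sigma_{\boldA_1}(a)\setminus B(0;r)$ equals $\sigma_\boldA(a)$ exactly. Nonemptiness causes no trouble, since the spectrum of an element of a unital \cstar-algebra is nonempty.
\end{itemize}
In the remaining cases, where $\boldA$ is non-unital or $a$ is not invertible, $\sigma(a)=\sigma_{\boldA_1}(a)$ and nothing more is needed.
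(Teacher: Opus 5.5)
Your reduction to the unitization and your case split match the paper's proof. Choosing a rational $r$ with $\sigma_\boldA(a)\cap\overline{B}(0;r)=\emptyset$ non-uniformly is a valid, and simpler, substitute for the paper's appeal to $1/\norm{a^{-1}}$. The gap is in steps (iii)--(iv).

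In this paper, a computably compact set is also c.e.\ closed. The proof of Theorem~\ref{thm:Spectral}(3) uses ``being computably compact, $\sigma(a)$ is in particular c.e.\ closed,'' and the covers produced in the proof of Theorem~\ref{theorem:EquivalentCompactness} consist of balls that meet the set. Your test ``$U\cup B(0;r)$ covers $\sigma_{\boldA_1}(a)$'' is valid, but it only shows that the finite rational covers of $\sigma_\boldA(a)$ can be enumerated, i.e.\ that $\sigma_\boldA(a)$ is co-c.e.\ closed. It gives no way to enumerate the rational balls that meet $\sigma_\boldA(a)$.

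The general claim in (iii) is false for this notion. Let $c\in(0,1)$ be right-c.e.\ but not computable, with $c=\lim_s c_s$ for a computable decreasing sequence of rationals, and put $g(t)=\sum_s 2^{-s-2}\min(1,\max(0,t-c_s))$. Then $K=\{(1-g(t))e^{it}: t\in[0,1]\}$ is a computable image of $[0,1]$, hence computably compact. However, $K\setminus B(0;1)=\{e^{it}: 0\le t\le c\}$ is not c.e.\ closed, since otherwise $c$ would also be left-c.e., hence computable.

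The repair is easy here because you remove only the point $0$, and $\sigma_{\boldA_1}(a)\cap\overline{B}(0;r)=\{0\}$. Use the following criterion: a rational ball $D$ meets $\sigma_\boldA(a)$ if and only if some rational ball $D'$ with $\overline{D'}\subseteq D$ and $0\notin\overline{D'}$ meets $\sigma_{\boldA_1}(a)$. Both side conditions are decidable for rational balls, and meeting $\sigma_{\boldA_1}(a)$ is c.e.\ because $\sigma_{\boldA_1}(a)$ is c.e.\ closed.

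Alternatively, argue as the paper does. Take a finite cover of $\sigma_{\boldA_1}(a)$ by rational balls of radius less than $r/2$, each meeting $\sigma_{\boldA_1}(a)$, and discard those containing $0$. The discarded balls lie inside $B(0;r)$, so the remaining balls still cover $\sigma_\boldA(a)$. Each remaining ball meets $\sigma_{\boldA_1}(a)$ at a non-zero point, hence meets $\sigma_\boldA(a)$.
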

\begin{proof}
Let $\boldA_1$ be the unitization of $\boldA$.  By Fact \ref{prop:CompUnit} there is a computably unital computable presentation $\boldA_1^\#$ of $\boldA_1$.  The inclusion map from $\boldA$ to $\boldA_1$ is computable, so if $a$ is a computable point of $\boldA^\#$ then it is also a computable point of $\boldA_1^\#$.  Theorem \ref{theorem:ComputableCompactSpectrum} implies that $\sigma_{\boldA_1}(a)$ is a computably compact subset of $\mathbb{C}$.  

If $\boldA$ is non-unital, then $\sigma_{\boldA}(a) = \sigma_{\boldA_1}(a)$ by definition, and so $\sigma_{\boldA}(a)$ is computably compact.

If $\boldA$ is unital, then we have $\sigma_{\boldA_1}(a) = \sigma_{\boldA}(a) \cup \{0\}$ (see \cite[paragraph after Remark 2.2.1]{Murphy.1990}).  

If $a$ is non-invertible in $\boldA$ then $0 \in \sigma_\boldA(a)$, so $\sigma_{\boldA_1}(a) = \sigma_{\boldA}(a)$ and $\sigma_{\boldA}(a)$ is thus computably compact.

If $a$ is invertible in $\boldA$ then $0 \not\in \sigma_{\boldA}(a)$, and in fact the distance from $0$ to $\sigma_{\boldA}(a)$ is precisely $1/r(a^{-1})$, where $r$ is the spectral radius.  Since $a$ is normal we have $r(a^{-1}) = \norm{a^{-1}}$, so the distance from $0$ to $\sigma_\boldA(a)$ is $1/\norm{a^{-1}}$; it follows from the fact that $a$ is a computable point that we can compute $\norm{a^{-1}}$, so the distance from $0$ to $\sigma_{\boldA}(a)$ is a computable real number.  Given $n \in \mathbb{N}$, we can find $N \geq n$ such that $2^{-N} < 1/\norm{a^{-1}}$.  Using the computable compactness of $\sigma_{\boldA_1}(a)$ we can effectively find a finite tuple of rational open balls of radius $2^{-N}$ that cover $\sigma_{\boldA_1}(a)$.  Removing any of these balls that contains $0$ produces a cover of $\sigma_{\boldA}(a)$.
\end{proof}

We note that the proof of Theorem \ref{theorem:ComputableCompactSpectrum} is uniform.  The proof of Theorem \ref{theorem:ComputableCompactSpectrumNonunital} has two sources of non-uniformity (namely, whether or not the algebra is unital, and if so, whether or not the element is invertible).  We do not know if the non-uniformity is necessary.  However, the proof is uniform if we know that the algebra is non-unital, and this (together with the uniformity of Theorem \ref{theorem:ComputableCompactSpectrum} for the computably unital case) is sufficient uniformity for all applications of which we are aware.

Theorem \ref{theorem:CompFuncCalc} allows us to apply standard functional calculus arguments in a computability setting.  In what follows, when we discuss computable functions on $\sigma(a)$ we are thinking of $\sigma(a)$ as a computably compact subset of $\mathbb{C}$.

\begin{corollary}\label{corollary:ApplyingFuncCalcUnital}
Let $\boldA^\#$ be a computably unital presentation of a \cstar-algebra $\boldA$ and let $a$ be a normal computable point of $\boldA^\#$.  Suppose that $f : \sigma(a) \to \mathbb{C}$ is a computable function.  Then $f(a)$ is a computable point of $\boldA^\#$.
\end{corollary}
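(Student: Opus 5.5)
By Theorem \ref{theorem:CompFuncCalc}(3), $\Phi$ is a computable map from the standard presentation of $C(\sigma(a))$ to $\boldA^\#$. It therefore suffices to show that $f$ is a computable point of the standard presentation of $C(\sigma(a))$, since computable maps send computable points to computable points. Recall that in this presentation the rational points are the polynomials in $z$ with coefficients in $\mathbb{Q}(i)$, where $z$ is the identity function and $1$ is the constant function. So the task is: given $k$, effectively find a polynomial $p$ with coefficients in $\mathbb{Q}(i)$ such that $\sup_{\lambda \in \sigma(a)} \abs{f(\lambda) - p(\lambda)} < 2^{-k}$.

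To produce such a $p$, I would search through all such polynomials and certify each candidate using the computable compactness of $\sigma(a)$ from Theorem \ref{theorem:ComputableCompactSpectrum}. The map $\lambda \mapsto \abs{f(\lambda) - p(\lambda)}$ is computable on $\sigma(a)$, uniformly in $p$, because $f$ is computable by hypothesis and polynomials with rational coefficients are computable. The maximum of a computable real-valued function on a computably compact set is a computable real, uniformly in the data; this is a standard fact (e.g.\ from \cite{DowneyMelnikov}). Hence $\norm{f-p}_\infty$ is computable uniformly in $p$, and the condition $\norm{f-p}_\infty < 2^{-k}$ is c.e. in $(p,k)$.

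The search terminates: by the Stone--Weierstrass theorem (the same fact that makes the standard presentation of $C(\sigma(a))$ a presentation), $f$ lies in the closure of the rational points, so some $p$ satisfies the strict inequality. We can therefore dovetail over all candidates $p$ and return the first one certified. This gives a computable sequence of rational points converging to $f$ at rate $2^{-k}$, so $f$ is a computable point.

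The main obstacle is the certification step. It requires the uniform computability of suprema over a computably compact set. If the cited fact is not available in exactly this form, I would prove it directly. Use the modulus of continuity that $f-p$ effectively has on the computably compact set $\sigma(a)$, together with finite rational covers of $\sigma(a)$ by small balls supplied by computable compactness. Because these balls intersect $\sigma(a)$, a computable point can be found near each center, and $\abs{f-p}$ can be evaluated there to approximate the maximum within any prescribed error.
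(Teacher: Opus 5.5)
Your overall plan (show that $f$ is a computable point of the standard presentation of $C(\sigma(a))$, then apply Theorem \ref{theorem:CompFuncCalc}(3)) matches the paper's, but one step fails as written. The generated points of the standard presentation of $C(\sigma(a))$ are rational $*$-polynomials in $1$ and $z$, i.e.\ polynomials in $z$ \emph{and} $\overline{z}$ with coefficients in $\mathbb{Q}(i)$. They are not polynomials in $z$ alone.

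This matters for termination of your search. Stone--Weierstrass requires closure under conjugation, and polynomials in $z$ alone are in general not dense in $C(\sigma(a))$. For instance, take $\boldA = C(\mathbb{T})$ with its standard presentation and $a = z$, so that $\sigma(a) = \mathbb{T}$, and let $f(\lambda) = \overline{\lambda}$. For every polynomial $p$ in $z$ we have $\frac{1}{2\pi}\int_0^{2\pi}\bigl(e^{-i\theta}-p(e^{i\theta})\bigr)e^{i\theta}\,d\theta = 1$, hence $\norm{f-p}_\infty \geq 1$. So your search, as described, never halts for any $k$.

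The fix is simply to search over polynomials in $z$ and $\overline{z}$. Conjugation is computable, so $\lambda \mapsto \abs{f(\lambda)-p(\lambda)}$ remains computable on $\sigma(a)$ uniformly in $p$. Stone--Weierstrass then does supply a $p$ with $\norm{f-p}_\infty < 2^{-k}$, and the rest of your argument is unchanged. This is also the only reading under which your parenthetical ``the same fact that makes the standard presentation of $C(\sigma(a))$ a presentation'' is accurate.

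With that correction your proof is valid, and it differs from the paper's only in how the key step is justified. The paper cites the Effective Weierstrass Theorem of Pour-El and Richards. You instead prove the needed instance directly: an unbounded search over rational $*$-polynomials, where each candidate is certified using the uniform computability of $\max_{\sigma(a)}\abs{f-p}$ on the computably compact set $\sigma(a)$ from Theorem \ref{theorem:ComputableCompactSpectrum}, and termination comes from classical Stone--Weierstrass.

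The citation is shorter. Your argument is self-contained and works verbatim on an arbitrary computably compact domain. The Pour-El--Richards theorem is formulated for functions on computable rectangles, so applying it to $\sigma(a)$ tacitly needs an extension step or a search like yours. Your version also makes explicit that the computable compactness of $\sigma(a)$ is exactly the ingredient being used.
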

\begin{proof}
By the Effective Weierstrass Theorem \cite[Theorem I.0.6]{PourElRichards.1989}, since $f$ is a computable function with computably compact domain, $f$ is effectively uniformly approximable by polynomials, which means that $f$ is a computable point of the standard presentation of $C(\sigma(a))$.  By Theorem \ref{theorem:CompFuncCalc}, the continuous functional calculus map $\Phi : C(\sigma(a)) \to \boldA$ is computable, so $f(a) = \Phi(f)$ is a computable point of $\boldA^\#$.
\end{proof}

\begin{corollary}\label{corollary:ApplyingFuncCalc}
Let $a$ be a normal computable point of $\boldA^\#$.  Suppose that $f : \sigma(a) \cup \{0\} \to \mathbb{C}$ is a computable function and that $f(0) = 0$.  Then $f(a)$ is a computable point of $\boldA^\#$.
\end{corollary}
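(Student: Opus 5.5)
The plan is to pass to the unitization and apply Corollary \ref{corollary:ApplyingFuncCalcUnital} there. By Fact \ref{prop:CompUnit}, $\boldA_1^\#$ is a computable, computably unital presentation of $\boldA_1$, and the inclusion $\boldA \to \boldA_1$ is computable from $\boldA^\#$ to $\boldA_1^\#$. So $a$ is a normal computable point of $\boldA_1^\#$.

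Next we identify the right domain. Write $\sigma_{\boldA_1}(a)$ for the spectrum in the unitization. As observed after the definition of spectrum, $0 \in \sigma_{\boldA_1}(a)$. Moreover $\sigma_{\boldA_1}(a)$ equals $\sigma_\boldA(a) \cup \{0\}$ when $\boldA$ is unital, and equals $\sigma_\boldA(a)$ by definition otherwise. In both cases $\sigma_{\boldA_1}(a) = \sigma(a) \cup \{0\}$, so $f$ is a computable function on $\sigma_{\boldA_1}(a)$. By Theorem \ref{theorem:ComputableCompactSpectrum}, applied to $\boldA_1^\#$, this set is computably compact, and this step is uniform. Corollary \ref{corollary:ApplyingFuncCalcUnital} then shows that $f(a)$, computed by the functional calculus in $\boldA_1$, is a computable point of $\boldA_1^\#$.

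The remaining and main step is to return to $\boldA^\#$. This has two parts: checking that $f(a)$ lies in $\boldA$ and agrees with the intended element, and checking computability with respect to $\boldA^\#$ rather than $\boldA_1^\#$.

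For membership, since $f(0)=0$ we use the Effective Weierstrass Theorem to approximate $f$ uniformly on $\sigma_{\boldA_1}(a)$ by polynomials $p_n(z,\bar z)$ with $\abs{f - p_n} < 2^{-n}$. We then replace $p_n$ by $q_n = p_n - p_n(0)$. Because $\abs{p_n(0)} = \abs{p_n(0) - f(0)} < 2^{-n}$, we still have $\abs{f - q_n} < 2^{-(n-1)}$. The $q_n$ have no constant term, so $q_n(a)$ is a *-polynomial in $a$ with no unit term, and hence lies in $\boldA$. Since $\Phi$ is isometric, $\norm{f(a) - q_n(a)} < 2^{-(n-1)}$. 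Thus $f(a)$ lies in the closed ideal $\boldA$. When $\boldA$ is unital, $f(a)$ also agrees with the functional calculus computed in $\boldA$, since both are limits of the same $q_n(a)$ and $1_\boldA$ never appears.

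For computability, the $q_n(a)$ are obtained uniformly as rational *-polynomials in the computable point $a$ of $\boldA^\#$. They are therefore a computable sequence of $\boldA^\#$, and their norms are computed in $\boldA$ itself. The fast Cauchy bound shows that $f(a)$ is a computable point of $\boldA^\#$. This avoids needing to invert the inclusion. The one care point is producing the polynomial approximants $p_n$ effectively, which the Effective Weierstrass Theorem supplies on the computably compact set $\sigma_{\boldA_1}(a)$.
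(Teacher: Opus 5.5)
Your proof is correct. It shares the paper's outline: pass to $\boldA_1^\#$, then use $f(0)=0$ to land in $\boldA$. The step where you get back from $\boldA_1^\#$ to $\boldA^\#$ is handled differently, however.

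\textbf{The paper's route.} The paper applies Corollary \ref{corollary:ApplyingFuncCalcUnital} as a black box to get $f(a)$ computable in $\boldA_1^\#$. It then asserts that $f(0)=0$ is equivalent to $f(a)\in\boldA$. Finally it pulls back along the inclusion, citing only that the inclusion is an isometry. This amounts to the standard search argument: since $\boldA_1^\#$ is computable and the inclusion is computable, one can compute $\norm{f(a)-g}_{\boldA_1}=\norm{f(a)-g}_{\boldA}$ for each generated point $g$ of $\boldA^\#$. One then searches for a $g$ within $2^{-k}$, and the search succeeds because such $g$ are dense in $\boldA$.

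\textbf{Your route.} You avoid the pullback altogether. By normalizing the Weierstrass approximants to have no constant term, the $q_n(a)$ are computable points of $\boldA^\#$ from the start, and $f(a)$ is their effective limit.

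\textbf{What each approach buys.} Your approach gives an explicit proof that $f(a)\in\boldA$, which the paper only asserts. In the unital case it also shows that the functional calculus computed in $\boldA_1$ agrees with the one in $\boldA$ itself, which the paper does not address. The paper's approach is shorter, because it reuses the unital corollary verbatim.

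One remark: your appeal to Corollary \ref{corollary:ApplyingFuncCalcUnital} is redundant, since you never use its conclusion. What your argument actually needs is the computable compactness of $\sigma_{\boldA_1}(a)$ from Theorem \ref{theorem:ComputableCompactSpectrum}, applied to $\boldA_1^\#$, together with the Effective Weierstrass Theorem on that set.
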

\begin{proof}
By Corollary \ref{corollary:ApplyingFuncCalcUnital}, $f(a)$ is a computable point of $\boldA_1^\#$.  The condition $f(0) = 0$ is equivalent having $f(a) \in \boldA$.  Since the inclusion map from $\boldA$ to $\boldA_1$ is an isometry, it follows that $f(a)$ is also a computable point of $\boldA^\#$.
\end{proof}

For later reference, we record some specific instances of Corollary \ref{corollary:ApplyingFuncCalc}

\begin{corollary}\label{corollary:PositiveNegativeParts}
If $a \in \boldA$ is a normal computable point of $\boldA^\#$ then the real and imaginary parts $a_r$ and $a_i$ of $a$ are computable points of $\boldA^\#$.  If $a$ is a self-adjoint computable point of $\boldA^\#$ then the positive and negative parts $a^+$, and $a^-$ are computable points of $\boldA^\#$.
\end{corollary}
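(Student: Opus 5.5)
The plan is to apply Corollary \ref{corollary:ApplyingFuncCalc} to suitable explicit functions in each case. For each function I will check two things: that it is computable on $\sigma(a) \cup \{0\}$, and that it vanishes at $0$. The domain $\sigma(a)\cup\{0\}$ is a subset of $\mathbb{C}$. By Theorem \ref{theorem:ComputableCompactSpectrumNonunital} we know $\sigma(a)$ is computably compact, so the union with the computable point $0$ is also computably compact. In any case, every function I use will be the restriction of a computable function on all of $\mathbb{C}$. The restriction of a computable function on $\mathbb{C}$ to a computably compact subset is computable there, which is the sense of computability used in Corollary \ref{corollary:ApplyingFuncCalcUnital}.

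For the real and imaginary parts, take $f_r(z) = \mathrm{Re}(z) = (z+\overline{z})/2$ and $f_i(z) = \mathrm{Im}(z) = (z - \overline{z})/(2i)$. Both are computable maps $\mathbb{C}\to\mathbb{C}$, since they are built from conjugation and rational arithmetic, and both vanish at $0$. The functional calculus is a *-homomorphism with $\Phi(z)=a$ and $\Phi(\overline{z}) = a^*$. It follows that $f_r(a) = (a+a^*)/2 = a_r$ and $f_i(a) = (a-a^*)/(2i) = a_i$, so Corollary \ref{corollary:ApplyingFuncCalc} gives computability of both. There is an even more direct route: $a^*$ is computable because the involution is computable in a computable presentation, and then $a_r$ and $a_i$ are computable by the computable vector space operations. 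Either argument suffices, and the second does not even need normality.

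For a self-adjoint computable point $a$, the spectrum satisfies $\sigma(a)\subseteq\mathbb{R}$. Take $f_+(z) = \max(\mathrm{Re}(z),0)$ and $f_-(z) = \max(-\mathrm{Re}(z),0)$. Both are computable on $\mathbb{C}$, since $\max$ of computable reals is computable uniformly, and both vanish at $0$. On $\sigma(a)$ they agree with $t\mapsto t^+$ and $t\mapsto t^-$, so $f_\pm(a) = a^\pm$ by the definition of the positive and negative parts via the functional calculus. Corollary \ref{corollary:ApplyingFuncCalc} then finishes the proof.

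The only point requiring care is that the functions are computable on the relevant compact domain and vanish at $0$, which is needed in the non-unital case. Both facts are immediate here, so I expect no real obstacle.
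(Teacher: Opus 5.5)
Your proof is correct. For $a_r$ and $a_i$ you do what the paper does: apply Corollary~\ref{corollary:ApplyingFuncCalc} to $\operatorname{Re}$ and $\operatorname{Im}$. Your extra remark is also correct and is a small improvement: $a_r=(a+a^*)/2$ and $a_i=(a-a^*)/(2i)$ are computable simply because the involution and the vector-space operations are computable maps of $\boldA^\#$. This shows that the normality hypothesis in the first sentence of the corollary is not needed.

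For $a^\pm$ your route genuinely differs. You apply the corollary once to each of $\max(\pm\operatorname{Re}(z),0)$. The paper instead applies it to $\sqrt{z}$ on positive elements, deduces that $\abs{a}=(a^2)^{1/2}$ is computable (using that $a^2$ is a positive computable point), and then writes $a^\pm=\frac12(\abs{a}\pm a)$.

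What each approach buys:
\begin{itemize}
\item Your version is shorter. Because your functions are defined on all of $\mathbb{C}$, you also avoid any discussion of computability on subsets of $[0,\infty)$.
\item The paper's version yields useful by-products: square roots of positive computable points and absolute values of self-adjoint computable points are computable. It also works directly with the textbook definition $a^\pm=\frac12(\abs{a}\pm a)$.
\end{itemize}

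If one takes that definition, your identification $f_\pm(a)=a^\pm$ needs one more line. You must check that $\Phi(\abs{t})=(a^2)^{1/2}$. This holds because $\Phi(\abs{t})$ is positive with square $\Phi(t^2)=a^2$, and positive square roots are unique. So the two routes rest on the same functional-calculus facts, just packaged differently.
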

\begin{proof}
The function $\operatorname{Re}(z)$ is computable on any computably compact subset of $\mathbb{C}$, and $\operatorname{Re}(0) = 0$, so we may apply Corollary \ref{corollary:ApplyingFuncCalc} to conclude that $a_r = \operatorname{Re}(a)$ is a computable point of $\boldA^\#$.  By similar reasoning, $a_i = \operatorname{Im}(a)$ is also a computable point of $\boldA^\#$.

If $a$ is positive then $\sigma(a) \subseteq [0, \infty)$.  The function $\sqrt{z}$ is computable on any computably compact subset of $[0, \infty)$, and $\sqrt{0} = 0$, so again we may apply Corollary \ref{corollary:ApplyingFuncCalc} to conclude that $a^{1/2}$ is a computable point of $\boldA^\#$.  

The remaining claims follow because $a^2$ is positive for any self-adjoint $a$ and we have $\abs{a} = (a^2)^{1/2}$, $a^+ = \frac{1}{2}(\abs{a}+a)$, and $a^- = \frac{1}{2}(\abs{a}-a)$.
\end{proof}

We note that the Corollaries above are uniform for non-unital algebras and also are uniform for computably unital presentations.

\section{Effective approximate units}
In many treatments of the theory of \cstar-algebras one of the early uses of the functional calculus is to show that every separable \cstar-algebra has a \emph{sequential approximate unit}.  As an example of how the results of the previous section can be applied, we show that computable \cstar-algebras have computable sequential approximate units.

\begin{definition}
A \emph{sequential approximate unit} for $\boldA$ is a sequence $(u_n)_{n \in \mathbb{N}}$ of positive elements of $\boldA$ such that for every $a \in \boldA$,
\[\lim_{n\to\infty}u_na = \lim_{n\to\infty}au_n = a.\]
\end{definition}

If $\boldA$ is unital then the constant sequence $u_n = 1_\boldA$ is an approximate unit, so the interesting case is when $\boldA$ is non-unital.  Therefore, \emph{for this section, $\boldA$ is assumed to be non-unital}. 

\begin{definition}
A sequential approximate unit is an \emph{effective approximate unit} of $\boldA^\#$ if it is a computable sequence and from a (code of a) generated point $a$ of $\boldA^\#$ it is possible to compute a modulus of convergence for $(u_na)_{n \in \mathbb{N}}$ and a modulus of convergence for $(au_n)_{n\in\mathbb{N}}$.
\end{definition}

Recall that an element $a \in \boldA$ is \emph{positive}, written $a \geq 0$, if $a$ is normal and $\sigma(a) \subseteq [0, \infty)$.  It is well-known that the positive elements of a \cstar-algebra can be equivalently described as those $a$ for which there exists a $b$ so that $a = b^*b$.  We let $\boldA^+$ denote the set of positive elements of $\boldA$.  The notion of positivity induces a translation-invariant partial order on the self-adjoint elements of $\boldA$ defined by $a \leq b$ if and only if $b - a \geq 0$.  See \cite[Section 2.2]{Murphy.1990} for a full discussion of positive elements of \cstar-algebras.

For the remainder of the section, let $\Lambda = \{a \in \boldA^+ : \norm{a} < 1\}$.  Classically, $\Lambda$ forms a net that is an approximate unit for $\boldA$, even when $\boldA$ may be non-separable (see \cite[Theorem 3.1.1]{Murphy.1990}).

\begin{lemma}\label{lemma:ComputableApproxUnitDirected}
If $a, b \in \Lambda$ are computable points of $\boldA^\#$ then $\Lambda$ contains a computable point $c$ of $\boldA^\#$ such that $a \leq c$ and $b \leq c$.  Moreover, an $\boldA^\#$-index of $c$ can be computed from $\boldA^\#$-indices of $a$ and $b$.
\end{lemma}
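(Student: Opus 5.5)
We follow the classical proof that $\Lambda$ is upward directed (Murphy, Theorem 3.1.1) and check that each operation is computable via the functional calculus results of the previous section. Classically one uses the mutually inverse order isomorphisms between $\Lambda$ and $\boldA^+$ given by
\[
x \mapsto x(1-x)^{-1} \quad\text{and}\quad y \mapsto y(1+y)^{-1},
\]
computed in the unitization $\boldA_1$. That is, $x(1-x)^{-1}=f(x)$ with $f(t)=t/(1-t)$, and $y(1+y)^{-1}=g(y)$ with $g(t)=t/(1+t)$. Given $a,b\in\Lambda$, set $a'=f(a)$ and $b'=f(b)$, and let $c=g(a'+b')$. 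Since $a'\le a'+b'$ and $g$ is operator monotone on $\boldA^+$ (Murphy's argument via $1-(1+y)^{-1}$), we get $a=g(a')\le c$. Likewise $b\le c$, and $\norm{c}<1$, so $c\in\Lambda$.

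For the computability we work in $\boldA_1^\#$ from Fact \ref{prop:CompUnit}, which is computably unital, and we use Corollary \ref{corollary:ApplyingFuncCalc}.

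\emph{Step 1: the spectrum of $a$.} The function $f$ is continuous with $f(0)=0$ on $\sigma(a)\cup\{0\}\subseteq[0,\norm{a}]$. It is computable there provided we know a rational $q<1$ with $\norm{a}\le q$, since $t/(1-t)$ is computable on $[0,q]$ uniformly in $q$. Such a $q$ can be found effectively: $\norm{a}$ is a computable real strictly less than $1$, so we approximate it until some approximation plus its error bound falls below $1$. This search halts because $\norm{a}<1$, and it is uniform in the index of $a$.

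\emph{Step 2: computing $a'$, $b'$ and $c$.} By Corollary \ref{corollary:ApplyingFuncCalc}, $a'=f(a)$ is a computable point of $\boldA^\#$, uniformly, and similarly $b'$. The sum $a'+b'$ is positive and computable. The function $g(t)=t/(1+t)$ is computable on $[0,\infty)$ with $g(0)=0$, and it suffices to know a rational bound on $\norm{a'+b'}$, which we compute. Applying Corollary \ref{corollary:ApplyingFuncCalc} again gives $c=g(a'+b')$ as a computable point of $\boldA^\#$.

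\emph{Uniformity.} The corollaries are uniform for non-unital algebras, as noted after Corollary \ref{corollary:PositiveNegativeParts}. Hence an index of $c$ is computed from indices of $a$ and $b$; the only extra search is the one for $q$ in Step 1.

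The main obstacle is this uniformity point. Corollary \ref{corollary:ApplyingFuncCalc} needs a computable $f$ on the computably compact set $\sigma(a)\cup\{0\}$. The pole of $f$ at $1$ means $f$ is not computable on a fixed domain, so we must effectively restrict to $[0,q]$. To do this, we first replace $\sigma(a)\cup\{0\}$ by the computably compact superset $[0,q]$. We then argue that computability of $f$ on $[0,q]$ gives an effective polynomial approximation on $[0,q]$, and hence on $\sigma(a)$, which is exactly what the proof of Corollary \ref{corollary:ApplyingFuncCalcUnital} uses.
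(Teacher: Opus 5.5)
Your proposal is correct. It constructs exactly the paper's element, since $g(f(a)+f(b)) = (a'+b')(1_{\boldA_1}+a'+b')^{-1}$ with $a'=a(1_{\boldA_1}-a)^{-1}$, and like the paper you defer the order-theoretic facts to Murphy. The difference is only in how you certify that $c$ is computable.

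The paper expands $(1_{\boldA_1}-a)^{-1}=\sum_n a^n$ and then simply asserts that $c$ is computable. That leaves two things implicit:
\begin{itemize}
\item the rational $q<1$ with $\norm{a}\le q$ that is needed to get a modulus of convergence for the series;
\item the computation of $(1_{\boldA_1}+a'+b')^{-1}$.
\end{itemize}
For the second, the unscaled Neumann series need not converge, because $\norm{a'+b'}$ can exceed $1$. One would have to rescale, for example by writing $(1_{\boldA_1}+y)^{-1}=(1+M)^{-1}\sum_n\bigl(1_{\boldA_1}-(1+M)^{-1}(1_{\boldA_1}+y)\bigr)^n$ for a rational $M\ge\norm{y}$.

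Your route sends both inversions through Corollary \ref{corollary:ApplyingFuncCalc}. The second inverse then comes for free, and the searches for $q$ and for a bound on $\norm{a'+b'}$ are made explicit. The cost is reliance on the previous section's machinery. However, your closing remark shows that less is actually needed: approximating $f$ and $g$ by polynomials on $[0,q]$ and $[0,M]$, and using that the functional calculus is isometric, requires only effective Weierstrass approximation on an interval, not the computable compactness of $\sigma(a)$.

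In short, the paper's power-series argument is more elementary. Yours is more complete and explicitly uniform.
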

\begin{proof}
Working in $\boldA_1$, let
\[c = (a(1_{\boldA_1}-a)^{-1}+b(1_{\boldA_1}-b)^{-1})(1_{\boldA_1}+a(1_{\boldA_1}-a)^{-1}+b(1_{\boldA_1}-b)^{-1})^{-1}.\]
It is shown on \cite[Page 78]{Murphy.1990} that $c \in \Lambda$ and that $a \leq c$ and $b \leq c$.  Since $\norm{a} < 1$ we have $(1_{\boldA_1}-a)^{-1} = \sum_{n=0}^{\infty}a^n$, and likewise for $b$.  Since $a$ and $b$ are computable points of $\boldA^\#$, this implies that $c$ is as well.
\end{proof}

\begin{lemma}\label{lemma:ComputableApproxUnitOneStep}
Suppose that $a \in \Lambda$, $a$ is a computable point of $\boldA^\#$, and $n \in \mathbb{N}$.  Then $\Lambda$ contains a computable point $e$ of $\boldA^\#$ such that whenever $c \in \Lambda$ and $c \geq e$ we have $\max\{\norm{a-ca}, \norm{a-ac}\} < 2^{-n}$.  Moreover, an $\boldA^\#$-index of $e$ can be computed from $n$ and an $\boldA^\#$-index of $a$.
\end{lemma}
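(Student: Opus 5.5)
Following the classical argument (Murphy, Theorem 3.1.1), I would take $e = f_m(a^*a)$ for a suitable computable function $f_m$ on $[0,\infty)$ with $f_m(0)=0$; then $e$ is a computable point of $\boldA^\#$ by Corollary \ref{corollary:ApplyingFuncCalc}. Since $a^*a$ is a computable normal (indeed positive) point, this applies, and the index of $e$ is computed uniformly from $m$ and an index of $a$ because the corollaries are uniform for non-unital algebras. The natural choice is $f_m(t) = mt/(1+mt)$, equivalently $e = m a^*a(1_{\boldA_1} + m a^*a)^{-1}$. Since $0 \le f_m < 1$ on $[0,\infty)$ and $\sigma(a^*a)$ is compact, we have $\norm{e} < 1$ and $e \geq 0$, so $e \in \Lambda$.

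For the estimate, take $c \in \Lambda$ with $c \geq e$. Then $1-c \le 1-e$ in $\boldA_1$, and $0 \le 1-c \le 1$, so $(1-c)^2 \le 1-c$. Conjugating by $a$ gives
\[
\norm{a-ca}^2 = \norm{a^*(1-c)^2a} \le \norm{a^*(1-c)a} \le \norm{a^*(1-e)a}.
\]
By functional calculus the last quantity equals $\norm{g_m(a^*a)}$, where $g_m(t) = t(1-f_m(t)) = t/(1+mt)$, and hence it is at most $\sup_{t\ge0} t/(1+mt) \le 1/m$. Therefore $\norm{a-ca} \le m^{-1/2}$. For $\norm{a-ac}$, I would handle it symmetrically by building $e$ from both $a^*a$ and $aa^*$. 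Concretely, let $e_1 = f_m(a^*a)$ and $e_2 = f_m(aa^*)$, and apply Lemma \ref{lemma:ComputableApproxUnitDirected} to obtain a computable $e \in \Lambda$ dominating both. Any $c \geq e$ then dominates each $e_i$. The bound for $\norm{a-ca}$ comes from $e_2$, since $\norm{a-ca}^2 = \norm{(1-c)aa^*(1-c)} \le \norm{a^*(1-c)a}$ after rewriting appropriately. The bound for $\norm{a - ac}$ comes from $e_1$, since $\norm{a-ac}^2 = \norm{(1-c)a^*a(1-c)}$.

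The main obstacle is matching the correct operator inequality to each side; I need to double-check which of $e_1, e_2$ controls which product. Since $\norm{x}^2 = \norm{x^*x} = \norm{xx^*}$, for $a - ca = (1-c)a$ I get $\norm{(1-c)a}^2 = \norm{a^*(1-c)^2a}$, and for $a-ac = a(1-c)$ I get $\norm{a(1-c)}^2 = \norm{(1-c)a^*a(1-c)}$. The second is handled via $\norm{a^*(1-c)}^2 = \norm{a(1-c)^2a^*}$, using $e_2$ and $g_m(aa^*)$. Taking both $e_i$ covers every case regardless. Finally I choose $m = 4^{n+1}$, so that $m^{-1/2} < 2^{-n}$. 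All steps are uniform in $n$ and the index of $a$: the function $f_m$ is uniformly computable in $m$, and Lemma \ref{lemma:ComputableApproxUnitDirected} is uniform.
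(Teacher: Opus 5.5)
Your argument is correct, and it differs from the paper's at the decisive estimate.

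The paper applies a ramp function directly to $a$, which is allowed because $a\in\Lambda$ is positive. It sets $e=g(a)$ and bounds $\norm{a-ca}\le\norm{1_{\boldA_1}-c}\le\norm{1-g}<2^{-n}$. But $g(0)=0$ forces $\norm{1-g}\ge 1$. More generally, $\norm{1_{\boldA_1}-c}\ge 1$ for every $c\in\boldA$, since $\boldA$ is non-unital and so $0\in\sigma_{\boldA_1}(c)$. So no estimate that passes through $\norm{1_{\boldA_1}-c}$ alone can work, and the paper's chain as written does not prove the lemma.

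Your step $\norm{(1-c)a}^2=\norm{a^*(1-c)^2a}\le\norm{a^*(1-c)a}\le\norm{a^*(1-e)a}$ keeps a factor of $a$ on each side. As a result only $\sup_{t\ge 0}t(1-f_m(t))\le 1/m$ matters, and this is the ingredient that actually makes the lemma go through. It would also rescue the paper's ramp choice after adjusting parameters, via $\norm{(1-c)a}^2\le\sup_{t\in\sigma(a)}t^2(1-g(t))$.

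What the paper's setup does buy is economy. Since $a\ge 0$, we have $a^*a=aa^*=a^2$, so your $e_1$ and $e_2$ coincide. The appeal to Lemma \ref{lemma:ComputableApproxUnitDirected} is then unnecessary, and a single $e=f_m(a^2)$ (or even $f_m(a)$) works for both $\norm{a-ca}$ and $\norm{a-ac}$.

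Positivity of $a$ also makes two slips in your bookkeeping harmless:
\begin{enumerate}
\item The identity $\norm{a^*(1-f_m(a^*a))a}=\norm{g_m(a^*a)}$ is not valid for non-normal $a$. The correct pairing is $\norm{a^*(1-f_m(aa^*))a}=\norm{g_m(aa^*)}$.
\item $\norm{a^*(1-c)}^2$ equals $\norm{a^*(1-c)^2a}$, not $\norm{a(1-c)^2a^*}$.
\end{enumerate}
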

\begin{proof}
Define $h : [0, \infty) \to \mathbb{C}$ by
\[h(t) = \begin{cases}1 & t \geq 2^{-n} \\ 2^{n+1}t-1 & 2^{-(n+1)} \leq t < 2^n \\ 0 & t < 2^{-(n+1)}\end{cases}\]
Note that $h$ is continuous and $h(0) = 0$.  Let $g = (1-2^{-(n+1)})h\vert_{\sigma(a) \cup \{0\}}$.  Set $e = g(a)$.  By Corollary \ref{corollary:ApplyingFuncCalc}, $e$ is a computable point of $\boldA^\#$.  Since $h \geq 0$, it follows that $e$ is positive.  We have $\norm{e} = \norm{g} \leq 1-2^{-(n+1)} < 1$, so $e \in \Lambda$.

Suppose that $c \in \Lambda$ and $c \geq e$.  Then $\norm{a-ca} = \norm{a(1_{\boldA_1}-c} < \norm{1_{\boldA_1} - c}$.  Since $c \geq e$, $1_{\boldA_1} -c \leq 1_{\boldA_1}-e$.  However, as $\norm{c} < 1$, $1_{\boldA_1} - c \geq 0$.  Hence $\norm{1_{\boldA} - c} \leq \norm{1-g} < 2^{-n}$.  The proof that $\norm{a-ca}<2^{-n}$ is similar.
\end{proof}

\begin{lemma}
There is a c.e. set $\Lambda_0$ of generated points of $\boldA^\#$ that is dense in $\Lambda$.
\end{lemma}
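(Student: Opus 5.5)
The natural idea is to map generated points into $\Lambda$ by a computable "positivization and shrinking" procedure whose image is dense in $\Lambda$. The catch is that its outputs need not be generated points. So I would first produce a computable sequence dense in $\Lambda$, and then replace each member by nearby generated points that are \emph{verifiably} in $\Lambda$.

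\textbf{Step 1: a computable dense sequence in $\Lambda$.} Enumerate the generated points $(q_k)_{k\in\N}$ of $\boldA^\#$ and rationals $r\in(0,1)\cap\mathbb{Q}$. Put
\[p_{k} = q_k^*q_k ,\]
which is a generated point and positive. Let
\[f_r(t) = \frac{r\,t}{1+t},\]
which is computable on any computably compact subset of $[0,\infty)$ and satisfies $f_r(0)=0$. By Corollary \ref{corollary:ApplyingFuncCalc}, uniformly in $(k,r)$ (uniform because $\boldA$ is non-unital), the element $x_{k,r}=f_r(p_k)$ is a computable point of $\boldA^\#$. It is positive with $\norm{x_{k,r}} \le r<1$, so $x_{k,r}\in\Lambda$.

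These points are dense in $\Lambda$. Given $c\in\Lambda$, let $p = c(1-c)^{-1}$, computed in $\boldA_1$; this $p$ lies in $\boldA$ and is positive, and $c = p(1+p)^{-1}$. Write $p=(p^{1/2})^*p^{1/2}$ and approximate $p^{1/2}$ by generated points $q_k$, so that $p_k\to p$. Continuity of the functional calculus, i.e.\ norm-continuity of $x\mapsto x(1+x)^{-1}$ on positive elements, then gives $f_r(p_k)\to r c$. Letting $r\to 1$ gives $rc\to c$.

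\textbf{Step 2: passing to generated points.} Each computable $x=x_{k,r}$ has a computable sequence of generated points $y_m$ with $\norm{y_m-x}<2^{-m}$. Such a $y_m$ is generally not positive, so I would not enumerate $y_m$ itself. Instead take $z_m = s_m^*s_m$, where $s_m$ is a generated approximation to $x^{1/2}$; here $x^{1/2}$ is computable by Corollary \ref{corollary:PositiveNegativeParts}. Then $z_m$ is a generated point, it is positive, and $z_m\to x$. We enumerate $z_m$ into $\Lambda_0$ only once a computed approximation certifies $\norm{z_m}<1$. Since norms of generated points are uniformly computable from above, the set of such certified $z_m$ is c.e.

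Since $\norm{x}\le r<1$, the certification eventually succeeds for all large $m$. So every $x_{k,r}$ is a limit of points of $\Lambda_0$, and $\Lambda_0$ is a subset of $\Lambda$ that is dense in $\Lambda$ by Step 1.

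\textbf{Main obstacle.} The delicate point is ensuring membership in $\Lambda$ exactly. Positivity is not decidable from a generated code, which is why I force it syntactically by using the form $s^*s$; the strict inequality $\norm{\cdot}<1$ is only semi-decidable, which is why $\Lambda_0$ is c.e.\ rather than computable. The density argument in Step 1 relies on the classical bijection $c\leftrightarrow c(1-c)^{-1}$ between $\Lambda$ and $\boldA^+$, together with continuity of the functional calculus; I expect both to go through routinely.
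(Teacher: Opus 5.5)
Your argument is correct, but it takes a detour the paper avoids. The paper's proof is essentially your Step 2 applied directly to an arbitrary $a\in\Lambda$. It sets $\Lambda_0=\{c^*c : c \text{ a generated point}, \norm{c}<1\}$. This set is c.e.\ because norms of generated points are computable, and it lies in $\Lambda$ because each $c^*c$ is positive with $\norm{c^*c}=\norm{c}^2<1$. For density, it writes $a=b^*b$ with $\norm{b}=\norm{a}^{1/2}<1$, picks a generated $c$ close to $b$ with $\norm{c}<1$, and uses $\norm{b^*b-c^*c}\le(\norm{b}+\norm{c})\norm{b-c}$.

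The observation you missed is that density is a classical property, so the element being approximated need not be computable. You never need a computable dense sequence in $\Lambda$ or computable square roots. All you need is that generated points are dense near $a^{1/2}$ and that generated points are closed under $c\mapsto c^*c$, which gives positivity syntactically.

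Consequently your Step 1 is sound but superfluous. That includes the maps $f_r$, the uniform appeal to Corollary~\ref{corollary:ApplyingFuncCalc} (and through it to the computable compactness of spectra), the correspondence $c\leftrightarrow c(1-c)^{-1}$, and the continuity of $x\mapsto x(1+x)^{-1}$. The same goes for the use of computable square roots in Step 2.

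The paper's route is elementary and uses none of the computable functional calculus machinery. Yours produces, as a by-product, a uniformly computable dense sequence $(x_{k,r})$ of (non-generated) points of $\Lambda$, which is not needed here.
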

\begin{proof}
Let $\Lambda_0 = \{c^*c : a \text{ is a generated point of $\boldA^\#$ and $\norm{c} < 1$}\}$.  Then $\Lambda_0 \subseteq \Lambda$ and $\Lambda_0$ is c.e..  Suppose that $a \in \Lambda$ and $\epsilon > 0$.  Since $a$ is positive, there is $b \in \boldA$ such that $a = b^*b$, and since $\norm{a} < 1$ we also have $\norm{b} < 1$.  There is a generated point $c$ of $\boldA^\#$ such that $\norm{c-b} < \epsilon(\norm{b}+1)^{-1}$ and so that $\norm{c} < 1$.  It follows that $\norm{a-c^*c} < \epsilon$.
\end{proof}

\begin{theorem}\label{thm:ApproximateUnit}
There is an effective approximate unit of $\boldA^\#$.
\end{theorem}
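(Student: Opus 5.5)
We mimic the classical proof that a separable \cstar-algebra has a sequential approximate unit, replacing each choice by an effective one supplied by the three preceding lemmas. First fix a computable enumeration $(a_k)_{k \in \mathbb{N}}$ of the generated points of $\boldA^\#$. We also fix a computable enumeration $(\lambda_k)_{k\in\mathbb{N}}$ of the c.e. set $\Lambda_0$ of generated points dense in $\Lambda$ from the last lemma. Each $\lambda_k$ is a computable point of $\boldA^\#$ lying in $\Lambda$, with an index computable from $k$.

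For each generated point $a_k$ we cannot apply Lemma \ref{lemma:ComputableApproxUnitOneStep} directly, since $a_k$ need not lie in $\Lambda$. Instead we use the positive element $b_k = a_k a_k^* /(\norm{a_k}^2+1)$, or better a rational rescaling $b_k = q_k a_ka_k^*$. Here $q_k$ is a positive rational computed so that $\norm{b_k}<1$; this is possible since $\norm{a_k}$ is computable. Then $b_k \in \Lambda$ is computable uniformly in $k$. The standard identity $\norm{a - ua}^2 = \norm{(1-u)aa^*(1-u)} \le \norm{(1-u)aa^*}$ holds for $0 \le u \le 1$, with $\norm{1-u}\le 1$. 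It shows that controlling $\norm{b_k - cb_k}$ to within $2^{-m}q_k$ controls $\norm{a_k - ca_k}$ to within $2^{-m/2}$. The right-hand version $\norm{a-au}$ is handled by using $a_k^*a_k$ in place of $a_ka_k^*$, or by noting that $\norm{a-au} = \norm{a^* - ua^*}$ and including both $a_k$ and $a_k^*$ in the bookkeeping.

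We now build $(u_n)$ recursively. We set $u_0 = \lambda_0$. At stage $n+1$, for each $k \le n$, Lemma \ref{lemma:ComputableApproxUnitOneStep} applied to $b_k$ (and to the analogous element built from $a_k^*$) with precision parameter $m = m(n,k)$ yields computable $e_{n,k}\in\Lambda$. This $e_{n,k}$ has the property that every $c \in \Lambda$ with $c \ge e_{n,k}$ satisfies $\norm{b_k - cb_k} < 2^{-m}$. We choose $m(n,k)$ large enough, using the computable $q_k$, that this gives $\max\{\norm{a_k - ca_k},\norm{a_k - a_kc}\} < 2^{-n}$. We then apply Lemma \ref{lemma:ComputableApproxUnitDirected} finitely many times to obtain a computable $u_{n+1} \in \Lambda$ dominating $u_n$, all the $e_{n,k}$ for $k\le n$, and also $\lambda_n$. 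Everything is uniform in $n$, so $(u_n)$ is a computable sequence of positive elements with $u_0 \le u_1 \le \cdots$.

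The modulus is then immediate. For $a = a_k$ and any $n \ge k$, we have $u_{n'} \ge u_{n+1} \ge e_{n,k}$ for all $n' > n$, so $\norm{a_k - u_{n'}a_k} < 2^{-n}$ and $\norm{a_k - a_ku_{n'}}<2^{-n}$. Thus $N(k,n) = \max(k,n)+1$ is a computable modulus of convergence for both sequences. This also shows convergence on a dense set, and since $\norm{u_n}\le 1$ a standard $\epsilon/3$ argument extends convergence to all of $\boldA$, so $(u_n)$ is a genuine sequential approximate unit. Including $\lambda_n$ is not strictly needed for this.

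The main obstacle is the step converting precision for the positive element $b_k$ into precision for $a_k$ itself. It requires the inequality $\norm{a-ua}^2 \le \norm{aa^* - uaa^*}$ for $u \in \Lambda$, which uses $0\le 1-u \le 1$ in $\boldA_1$. It also requires tracking the computable scaling factor $q_k$ so that $m(n,k)$ is effectively determined.
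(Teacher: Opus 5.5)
Your proof is correct, but it reduces a general generated point to the positive contractions handled by Lemma~\ref{lemma:ComputableApproxUnitOneStep} differently from the paper.

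The paper builds $(u_n)$ around the dense c.e.\ set $\Lambda_0$: from stage $k$ on, $u_n$ dominates an element controlling the $k$th member of $\Lambda_0$ to precision $2^{-n}$. For a generated point $a$, it then rescales to $b=(\norm{a}+1)^{-1}a$ and splits $b=b_r^+-b_r^-+ib_i^+-ib_i^-$ using Corollary~\ref{corollary:PositiveNegativeParts}. It searches $\Lambda_0$ for approximants to the four positive parts and finishes with the triangle inequality. Its modulus therefore depends on that search and on $\norm{a}$.

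You instead make the generated points themselves the targets of the construction. You feed the lemma the positive generated points $q_ka_ka_k^*$ and $q_ka_k^*a_k$. You transfer the estimate back through the \cstar-identity $\norm{a-ua}^2=\norm{(1_{\boldA_1}-u)aa^*(1_{\boldA_1}-u)}\le\norm{aa^*-uaa^*}$, which holds because $\norm{1_{\boldA_1}-u}\le 1$ for $u\in\Lambda$. The square-root loss in precision is absorbed by choosing $m(n,k)$ from the computable rational $q_k$. This is the standard \cstar-algebraic reduction from arbitrary elements to positive ones.

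Your route buys a leaner argument. It needs no square roots or positive and negative parts, no density lemma, and no search through $\Lambda_0$. It also gives an explicit modulus $\max(k,n)+1$ that depends only on the index $k$, since all dependence on $a_k$ is built into the sequence. Your $\epsilon/3$ remark makes explicit a step the paper leaves tacit: convergence on generated points yields a genuine approximate unit on all of $\boldA$.

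The paper's route is chosen mainly to showcase the functional-calculus corollaries of the previous section. Your proof uses those only through Lemma~\ref{lemma:ComputableApproxUnitOneStep}. As you note, having $u_{n+1}$ dominate $\lambda_n$ is unnecessary in your construction.
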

\begin{proof}
Fix a c.e. set $\Lambda_0$ as in the previous lemma.  We recursively define a sequence $(u_n)_{n\in\mathbb{N}}$.  Suppose that $u_j$ has been defined for all $j < n$.  By Lemma \ref{lemma:ComputableApproxUnitOneStep}, for each $k \in \{0, \ldots, n\}$, we can compute an $\boldA^\#$-index of an $e_{k,n} \in \Lambda_0$ so that $\max\{\norm{c-ce_{k,n}}, \norm{c-e_{k,n}c}\} < 2^{-n}$ whenever $c \in \Lambda$ is such that $c \geq e_{k,n}$.  By Lemma \ref{lemma:ComputableApproxUnitDirected} we can then compute an $\boldA^\#$-index of a $u_n \in \Lambda$ such that $u_n \geq e_{k, n}$ for all $k \leq n$ and so that $u_n \geq u_j$ for all $j < n$.  We will show that this sequence $(u_n)_{n\in\mathbb{N}}$ is an effective approximate unit.

Fix a generated point $a$ of $\boldA^\#$.  Let $b = (\norm{a}+1)^{-1}a$.  Let $b_r$ and $b_i$ be the real and imaginary parts of $b$, respectively, and let their respective positive and negative parts be $b_r^+, b_r^-, b_i^+$, and $b_i^-$.  By Corollary \ref{corollary:PositiveNegativeParts}, $b_r^+, b_r^-, b_i^+$, and $b_i^-$ are computable points of $\boldA^\#$.  Moreover, it is possible to compute an $\boldA^\#$-index for each of them from a $\boldA^\#$-index for $a$.

Fix $k \in \mathbb{N}$.  Set $k_0 = -\lceil \log_2(\norm{a} + 1) \rceil + k + 4$.  Compute $k(r, +) \in \mathbb{N}$ such that $\norm{b_r^+-u_{k(r,+)}} < 2^{-k_0}$, and likewise compute such $k(r, -)$, $k(i, +)$, and $k(i, -)$.  Let $N_0 = \max\{k(r,+), k(r,-), k(i,+), k(i, -)\}$.

Fix $n \geq N_0$.  Since $\norm{u_n} < 1$ and $N_0 \geq k(r,+)$,
\begin{align*}
\abs{\norm{b_r^+ - u_n b_r^+} - \norm{u_{k(r,+)} - u_n u_{k(r,+)}}}
&= \abs{\norm{b_r^+} - \norm{u_{k(r,+)}}}\norm{1_{\boldA_1}-u_n} \\ &\leq \norm{b_r^+-u_{k(r,+)}}(\norm{1_{\boldA_1}}+\norm{u_n}) \\
&< 2\norm{b_r^+ - u_{k(r,+)} } \\
&< 2\cdot2^{-k_0}
\end{align*}
thus
\[\norm{b_r^+ - u_nb_r^+} < \norm{u_{k(r,+)} - u_nu_{k(r,+)}} + 2\cdot2^{-k_0}.\]
Also, by construction, $\norm{u_{k(r,+)}-u_nu_{k(r,+)}} < 2^{-k_0}$, so we obtain
\[\norm{b_r^+ - u_nb_r^+} < 3\cdot2^{-k_0}.\]
Similar calculations produce the same estimate using $b_r^-$, $b_i^+$, and $b_i^-$. The decomposition $b = b_r^+ - b_r^- + ib_i^+ - ib_i^-$
therefore implies
\[
\norm{b-u_nb} < 4\cdot3\cdot2^{-k_0} < 2^{-k_0+4}.
\]
Thus, by definition of $k_0$,
\[\norm{b-u_nb} < (\norm{a}+1)2^{-k}.\]
Finally, since $b = (\norm{a}+1)^{-1}a$, this implies the desired
\[\norm{a-u_na} < 2^{-k}.\]
It follows similarly that $\norm{a - a u_n} < 2^{-k}$.
\end{proof}

\section{Spectral theory for compact operators}\label{section:CompactOperators}
We now turn our attention to compact operators on separable Hilbert spaces.  Throughout this section, let $\mathcal{H}$ be a separable Hilbert space and $\mathcal{H}^\#$ be a presentation of $\mathcal{H}$.  Let $(\rho_j)_{j \in \mathbb{N}}$ be an enumeration of the rational vectors of $\mathcal{H}^\#$.  For each non-zero $v \in \mathcal{H}$, let $P_v$ denote the orthogonal projection onto $v$.  We refer to $P_{\rho_j}$ as the \emph{$j$th rational rank $1$ projection}.  We denote by $\mathcal{K}(\mathcal{H})$ the (non-unital) \cstar-algebra of compact operators on $\mathcal{H}$.

\subsection{Presentations of the compact operators}
Our first goal is to use $\mathcal{H}^\#$ to produce a presentation of $\mathcal{K}(\mathcal{H})$.

\begin{lemma}\label{lem:ProjectionNorm}
For all $v, w \in \mathcal{H}$,
\[\norm{P_v - P_w} \leq 2\norm{\norm{v}^{-1}v - \norm{w}^{-1}w}.\]
\end{lemma}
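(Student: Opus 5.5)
Since $P_v$ depends only on the line spanned by $v$, we have $P_v = P_{\norm{v}^{-1}v}$, so it suffices to prove the inequality for unit vectors $v, w$ (the statement implicitly assumes $v, w \neq 0$, as $P_v$ is only defined for non-zero $v$). For a unit vector $u$, the projection is given by $P_u x = \langle x, u\rangle u$. So the claim reduces to
\[\norm{P_v - P_w} \leq 2\norm{v - w}\]
for unit vectors $v, w$.

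Fix $x \in \mathcal{H}$ with $\norm{x} \leq 1$. We split the difference by adding and subtracting the mixed term $\langle x, v\rangle w$:
\[P_v x - P_w x = \langle x, v\rangle (v - w) + \langle x, v - w\rangle w.\]
We then bound each term with the Cauchy--Schwarz inequality. Since $\abs{\langle x, v\rangle} \leq \norm{x}\norm{v} \leq 1$, the first term has norm at most $\norm{v-w}$. Since $\abs{\langle x, v-w\rangle} \leq \norm{v-w}$ and $\norm{w} = 1$, the second term also has norm at most $\norm{v-w}$. Hence $\norm{(P_v - P_w)x} \leq 2\norm{v-w}$. Taking the supremum over the unit ball gives the operator-norm bound.

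There is no real obstacle here. The only point needing care is the initial normalization: we must check that $P_v$ coincides with $P_{\norm{v}^{-1}v}$, which holds because both are the orthogonal projection onto $\mathbb{C}v$. This is also why the right-hand side of the statement involves the normalized vectors.
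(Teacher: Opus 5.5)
Your proposal is correct and follows the paper's proof: the paper likewise reduces to unit vectors via $P_{\lambda v} = P_v$, inserts the mixed term $\langle u, v\rangle w$, and bounds the two resulting pieces by $\norm{v-w}$ each using Cauchy--Schwarz. Your remark that $v, w$ must be non-zero (so the rescaling is by a non-zero scalar) is a small point the paper leaves implicit.
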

\begin{proof}
First, suppose that $v, w, u \in \mathcal{H}$ are unit vectors.  Then we have:
\begin{align*}
\norm{P_v(u) - P_w(u)} &= \norm{\langle u,v \rangle v - \langle u, w \rangle w} \\
&= \norm{\langle u, v \rangle v - \langle u, v \rangle w + \langle u, v \rangle w - \langle u, w \rangle w} \\
&\leq \abs{\langle u, v \rangle}\norm{v-w} + \abs{\langle u,v \rangle - \langle u, w \rangle}\norm{w} \\
&= \abs{\langle u, v\rangle}\norm{v-w} + \abs{\langle u, v-w \rangle} \\
&\leq \norm{v-w} + \norm{v-w} &\text{(Cauchy-Schwartz)}\\
&= 2\norm{v-w}
\end{align*}
Thus when $\norm{v} = \norm{w} = 1$ we have $\norm{P_v - P_w} \leq 2\norm{v-w}$.  Since $P_{\lambda v} = P_v$ for all $\lambda \in \mathbb{C}$, and likewise for $w$, rescaling gives the desired conclusion.
\end{proof}

\begin{lemma}\label{lem:rationaldense}
The span of the set of rational rank $1$ projections is dense in $\mathcal{K}(\mathcal{H})$.    
\end{lemma}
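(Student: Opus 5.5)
We will show that every compact operator can be approximated in norm by finite linear combinations of rational rank $1$ projections $P_{\rho_j}$. The argument proceeds in three reductions, each of which only costs an arbitrarily small error in operator norm.

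\emph{Step 1: reduce to finite-rank operators.} It is classical that the finite-rank operators are norm-dense in $\mathcal{K}(\mathcal{H})$ (for separable $\mathcal{H}$ one can take $P_nT$ with $P_n$ the projection onto the span of the first $n$ vectors of an orthonormal basis). So it suffices to approximate a finite-rank operator $T$.

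\emph{Step 2: reduce to rank $1$ projections.} We pass from rank $1$ operators to rank $1$ projections in one of two ways.

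\begin{itemize}
\item If $T$ is normal, the spectral theorem gives $T=\sum_{k}\lambda_k P_{v_k}$ with orthonormal $v_k$.
\item In general, write $T=T_r+iT_i$ with $T_r=(T+T^*)/2$ and $T_i=(T-T^*)/2i$. Both are self-adjoint and of finite rank, since their ranges lie in $\operatorname{ran}T+\operatorname{ran}T^*$. Each is therefore a finite real combination $\sum_k \lambda_k P_{v_k}$ of rank $1$ projections onto eigenvectors.
\end{itemize}

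Alternatively, one can use polarization on $u\otimes w^*$. The self-adjoint splitting is the cleaner of the two. Either way, $T$ is a finite linear combination of rank $1$ projections $P_v$ with $v\neq 0$.

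\emph{Step 3: replace each $P_v$ by a rational projection.} Given a unit vector $v$ and $\delta>0$, choose a rational vector $\rho_j$ with $\norm{\rho_j - v}<\delta$; this is possible because the rational vectors are dense. Since $\norm{v}=1$, the triangle inequality gives
\[
\norm{\norm{\rho_j}^{-1}\rho_j - v}\le 2\norm{\rho_j - v}<2\delta .
\]
Lemma \ref{lem:ProjectionNorm} then yields $\norm{P_v-P_{\rho_j}}\le 4\delta$. Taking $\delta$ small relative to $\sum_k|\lambda_k|$ gives a combination $\sum_k\lambda_k P_{\rho_{j_k}}$ within $\epsilon$ of $\sum_k\lambda_kP_{v_k}$.

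Combining the three steps, every compact operator is within any prescribed $\epsilon$ of the span of the rational rank $1$ projections. The only step requiring real care is Step 2, ensuring that arbitrary (non-normal) finite-rank operators lie in the span of rank $1$ projections. The real/imaginary part decomposition handles this, since finite-rank self-adjoint operators diagonalize by the finite-dimensional spectral theorem.
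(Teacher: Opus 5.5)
Your proof is correct and takes essentially the paper's approach: the paper cites Murphy's Theorems 2.4.5 and 2.4.6 for the density of the span of all rank $1$ projections in $\mathcal{K}(\mathcal{H})$, and your Steps 1 and 2 prove that fact directly via density of finite-rank operators and the real/imaginary decomposition. The paper then, just as in your Step 3, uses density of the rational vectors and Lemma \ref{lem:ProjectionNorm} to replace each $P_v$ by a rational rank $1$ projection (taking $\delta<1$ ensures $\rho_j\neq 0$, so $P_{\rho_j}$ is defined).
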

\begin{proof}
By \cite[Theorems 2.4.5 and 2.4.6]{Murphy.1990}, the span of the set of all rank $1$ projections is dense in $\mathcal{K}(\mathcal{H})$.  It therefore suffices to show that every rank $1$ projection can be approximated by rational rank $1$ projections.  The rational vectors of $\mathcal{H}^{\#}$ are dense in $\mathcal{H}$, so Lemma \ref{lem:ProjectionNorm} shows that the rational rank $1$ projections are dense in the rank $1$ projections, and hence the span of the rational rank $1$ projections is dense in $\mathcal{K}(\mathcal{H})$.
\end{proof}

\begin{definition}
By $\mathcal{K}(\mathcal{H}^\#)$ we denote the presentation of $\mathcal{K}(\mathcal{H})$ whose $j$th special point is $P_{\rho_j}$.
\end{definition}

By Lemma \ref{lem:rationaldense}, $\mathcal{K}(\mathcal{H}^\#)$ is indeed a presentation of $\mathcal{K}(\mathcal{H})$.  The norm estimate given by Lemma \ref{lem:ProjectionNorm} immediately implies:

\begin{corollary}\label{cor:ComputePtProj}
If $v$ is a computable point of $\mathcal{H}^\#$ then $P_v$ is a computable point of $\mathcal{K}(\mathcal{H}^\#)$.  Moreover, from an index for $v$ we can compute an index for $P_v$.
\end{corollary}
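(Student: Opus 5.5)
The plan is to approximate $P_v$ by special points $P_{\rho_j}$ of $\mathcal{K}(\mathcal{H}^\#)$, using Lemma \ref{lem:ProjectionNorm} to convert closeness of normalized vectors into closeness of projections. We implicitly assume $v \neq 0$, since $P_v$ is only defined for non-zero $v$; in particular $\norm{v}$ is a computable positive real. Given $k \in \mathbb{N}$, we will produce a rational vector $\rho_j$ with $\norm{P_v - P_{\rho_j}} < 2^{-k}$. Doing this uniformly in $k$ gives a Cauchy name for $P_v$ in terms of special points of $\mathcal{K}(\mathcal{H}^\#)$, hence $P_v$ is computable.

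The first step is to find a computable positive rational lower bound for $\norm{v}$. Since $\norm{v}$ is computable from an index of $v$ and $v \neq 0$, we search for a rational approximation $q$ with $\abs{q - \norm{v}} < 2^{-m}$ and $q > 2 \cdot 2^{-m}$; this search terminates and yields $\delta = q - 2^{-m} > 0$ with $\norm{v} > \delta$.

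Next, given $k$, we fix a target precision $\epsilon$ (a rational depending on $\delta$ and $k$) and search for a rational vector $\rho_j$ with $\norm{v - \rho_j} < \epsilon$. The search succeeds because the rational vectors are dense, and distances $\norm{v - \rho_j}$ are computable uniformly in $j$ from the index of $v$. For unit-vector normalization we use the elementary estimate
\[
\norm{\norm{v}^{-1}v - \norm{w}^{-1}w} \leq \frac{2\norm{v-w}}{\norm{v}},
\]
which follows from adding and subtracting $\norm{v}^{-1}w$ and using $\abs{\norm{v}-\norm{w}} \leq \norm{v-w}$. Choosing $\epsilon < \min\{\delta/2,\ \delta 2^{-k-2}\}$ ensures $\rho_j \neq 0$. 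Combined with Lemma \ref{lem:ProjectionNorm}, it then gives $\norm{P_v - P_{\rho_j}} \leq 4\epsilon/\delta < 2^{-k}$.

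Every step is computable uniformly in an index for $v$, so an index for $P_v$ is obtained from one for $v$. The only subtle point is the first step: obtaining an effective lower bound for $\norm{v}$ uses $v \neq 0$, and the bound is found by an unbounded search rather than computed directly.
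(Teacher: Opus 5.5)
Your proof is correct and follows the paper's route. The paper simply says the corollary is immediate from the norm estimate in Lemma~\ref{lem:ProjectionNorm}, and you have supplied the details it leaves implicit: an effective positive lower bound $\delta$ for $\norm{v}$, and the normalization estimate $\norm{\norm{v}^{-1}v-\norm{w}^{-1}w}\leq 2\norm{v-w}/\norm{v}$. One caveat: computing $\norm{v}$ and $\norm{v-\rho_j}$ uses that $\mathcal{H}^\#$ is a computable presentation, which the section does not formally assume at this point. That assumption holds in every later use and is what makes your uniform argument go through. For the non-uniform clause alone you can avoid it by hard-coding a rational $\delta<\norm{v}$ and taking $\rho_j$ directly from a fast Cauchy name of $v$.
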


We next aim to show that $\mathcal{K}(\mathcal{H}^\#)$ is a computable presentation whenever $\mathcal{H}^\#$ is computable.

\begin{lemma}\label{lem:IdentityMapLift}
Suppose that $\mathcal{H}^\dagger$ is a presentation of $\mathcal{H}$ such that the identity map is a computable map from $\mathcal{H}^\#$ to $\mathcal{H}^\dagger$.  Then the identity map is a computable map from $\mathcal{K}(\mathcal{H}^\#)$ to $\mathcal{K}(\mathcal{H}^\dagger)$.
\end{lemma}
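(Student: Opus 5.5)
To show the identity is a computable map from $\mathcal{K}(\mathcal{H}^\#)$ to $\mathcal{K}(\mathcal{H}^\dagger)$, it suffices, by the standard criterion for presentations, to show that the special points of $\mathcal{K}(\mathcal{H}^\#)$ are uniformly computable points of $\mathcal{K}(\mathcal{H}^\dagger)$. That is, from $j$ we should be able to compute a $\mathcal{K}(\mathcal{H}^\dagger)$-index of $P_{\rho_j}$. Once this holds, every generated point of $\mathcal{K}(\mathcal{H}^\#)$ is a rational *-polynomial in finitely many special points. Its image is therefore uniformly computable, because the *-algebra operations are computable on the rational points of any presentation, and limits of uniformly computable points with computable moduli are computable. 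Since the identity is an isometry, this gives computability of the map itself: given a fast Cauchy name in $\mathcal{K}(\mathcal{H}^\#)$, we replace each term with a sufficiently good $\dagger$-approximation.

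For the key step, fix $j$. The vector $\rho_j$ is a rational vector of $\mathcal{H}^\#$. Since the identity map $\mathcal{H}^\# \to \mathcal{H}^\dagger$ is computable, $\rho_j$ is a computable point of $\mathcal{H}^\dagger$, and an $\mathcal{H}^\dagger$-index for it can be computed uniformly from $j$. We then apply Corollary \ref{cor:ComputePtProj} to the presentation $\mathcal{H}^\dagger$; its uniformity clause yields a $\mathcal{K}(\mathcal{H}^\dagger)$-index of $P_{\rho_j}$ computed from $j$.

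The main obstacle is the degenerate case $\rho_j = 0$, where $P_{\rho_j}$ and the normalization in Lemma \ref{lem:ProjectionNorm} are problematic. The proof of Corollary \ref{cor:ComputePtProj} approximates $\norm{v}^{-1}v$ by normalized rational vectors of $\mathcal{H}^\dagger$ and uses Lemma \ref{lem:ProjectionNorm}, and this requires a positive lower bound on $\norm{v}$. The plan is to follow the paper's convention that $P_v$ is defined only for nonzero $v$. Whether a rational vector of $\mathcal{H}^\#$ is zero is decidable once we compute its norm to sufficient precision, provided $\mathcal{H}^\#$ has computable norms on rational vectors; otherwise we treat the zero vector separately. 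This lets us search for a rational lower bound on $\norm{\rho_j}$ before running the approximation. With that bound in hand, the estimate $\norm{P_v - P_w} \leq 2\norm{\norm{v}^{-1}v - \norm{w}^{-1}w}$ turns $2^{-m}$-approximations of $\rho_j$ in $\mathcal{H}^\dagger$ into $2^{-k}$-approximations of $P_{\rho_j}$ by rational rank $1$ projections of $\mathcal{K}(\mathcal{H}^\dagger)$, with $m$ computable from $k$ and the lower bound. This completes the argument.
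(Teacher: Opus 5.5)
Your argument is essentially the paper's. You reduce to the special points $P_{\rho_j}$, use computability of the identity $\mathcal{H}^\#\to\mathcal{H}^\dagger$ to approximate $\rho_j$ by rational vectors $\xi$ of $\mathcal{H}^\dagger$, and turn that into a bound on $\norm{P_{\rho_j}-P_\xi}$ via Lemma \ref{lem:ProjectionNorm}. The paper writes out the passage from special points to generated points using an explicit modulus of continuity for the $*$-polynomial on the unit ball. That step is where the special points having norm $1$ enters your ``standard criterion'': $\mathcal{K}(\mathcal{H}^\dagger)$ is not assumed computable, so norm bounds for products are not otherwise available.

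One statement in your last paragraph is false and should go. Even when $\mathcal{H}^\#$ has computable norms, it is not decidable whether a rational vector is $0$; only $\{j : \rho_j \neq 0\}$ is c.e. For example, a computable presentation of $\mathbb{C}^2$ can have special points $e_0, e_1, x_n$, where $x_n = 2^{-s}e_0$ if $n$ enters the halting set at stage $s$ and $x_n=0$ otherwise.

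You do not need decidability. A special point $P_{\rho_j}$ only arises for $\rho_j\neq 0$, and for such $j$ the search for a rational $r$ with $0<r<\norm{\rho_j}$ terminates. Then any rational $\xi$ of $\mathcal{H}^\dagger$ with $\norm{\rho_j-\xi}<\delta\le r/2$ is nonzero and satisfies $\norm{P_{\rho_j}-P_\xi}\le 4\delta/r$.

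You are right that this lower bound needs computable norms, either on $\mathcal{H}^\#$ or on $\mathcal{H}^\dagger$ (in the latter case $\norm{\rho_j}$ is computed from its $\mathcal{H}^\dagger$-name). The lemma as stated assumes neither. Your phrase ``otherwise we treat the zero vector separately'' does not address this: the difficulty is bounding $\norm{\rho_j}$ away from $0$ uniformly in $j$, not the zero vector itself. The paper's proof relies on the same hypothesis tacitly. This is harmless in the lemma's only application, Proposition \ref{prop:PresentationOfCompacts}, where $\mathcal{H}^\#$ is computable and $\mathcal{H}^\dagger$ is generated by an orthonormal basis.
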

\begin{proof} 
It suffices to show that from a generated point $\rho$ of $\mathcal{K}(\calH^\#)$ and $k \in \N$ it is possible to compute 
a generated point $\rho'$ of $\mathcal{K}(\calH^\dagger)$ so that $\norm{\rho - \rho'} < 2^{-k}$. 

Let $\xi_j$ denote the $j$-th rational vector of $\calH^\dagger$. 
Suppose $\rho = p(P_{\rho_0}, \ldots, P_{\rho_n})$ where $p$ is a rational $*$-polynomial with no constant term.  
From $p$, it is possible to compute a modulus of continuity $g$ for $p$ on the unit ball of $\calH^n$; that is, a function $g : \mathbb{N} \to \mathbb{N}$ such that whenever $v_0, \ldots, v_n, u_0, \ldots, u_n$ all have norm at most $1$ and $\max_j \norm{v_j - u_j} \leq 2^{-g(m)}$ we have $\norm{p(v_0, \ldots, v_n) - p(u_0, \ldots, u_n)} < 2^{-m}$. 

Since the identity map from $\mathcal{H}^\#$ to $\mathcal{H}^\dagger$ is computable, each $\xi_j$ is a computable point of $\mathcal{H}^\#$. Thus, by Lemma \ref{lem:ProjectionNorm}, we can compute $\xi_{j_0}, \ldots, \xi_{j_n}$ so that 
$\norm{P_{\rho_s} - P_{\xi_{j_s}}} < 2^{-g(k)}$ for all $s \leq n$.  
Thus,
\[\norm{\rho - p(P_{\xi_{j_0}}, \ldots, P_{\xi_{j_n}})} < 2^{-k}.\]
\end{proof}

\begin{proposition}\label{prop:PresentationOfCompacts}
If $\mathcal{H}^\#$ is computable then so is $\mathcal{K}(\mathcal{H}^\#)$.
\end{proposition}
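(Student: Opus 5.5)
The plan is to reduce to a presentation of $\mathcal{H}$ in which rational vectors have finite support with respect to a computable orthonormal basis. Then every generated point of the associated presentation of $\mathcal{K}(\mathcal{H})$ becomes a finite matrix with entries in $\mathbb{Q}(i)$, and its norm can be computed by linear algebra.

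\textbf{Step 1 (change of presentation).} By Fact \ref{fact:ONB}, fix a computable orthonormal basis $(e_n)_n$ of $\mathcal{H}^\#$; this basis may be finite. Let $\mathcal{H}^\dagger$ be the presentation whose special points are the $e_n$, so its rational vectors are the finite $\mathbb{Q}(i)$-combinations of the $e_n$. Since each $e_n$ is a computable point of $\mathcal{H}^\#$, the identity map $\mathcal{H}^\dagger\to\mathcal{H}^\#$ is computable.

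For the reverse direction, take a rational vector $\rho_j$ of $\mathcal{H}^\#$. The coefficients $\langle \rho_j, e_n\rangle$ are uniformly computable, and so is
\[\norm{\rho_j - \textstyle\sum_{n\le N}\langle\rho_j,e_n\rangle e_n}^2 = \norm{\rho_j}^2 - \textstyle\sum_{n\le N}\abs{\langle\rho_j,e_n\rangle}^2 .\]
This quantity tends to $0$, so we can search for an $N$ making it small and then round the coefficients to $\mathbb{Q}(i)$. Hence the identity map $\mathcal{H}^\#\to\mathcal{H}^\dagger$ is computable as well.

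Applying Lemma \ref{lem:IdentityMapLift} in both directions, the identity is a computable isomorphism between $\mathcal{K}(\mathcal{H}^\#)$ and $\mathcal{K}(\mathcal{H}^\dagger)$. Since computability of a presentation is invariant under computable isomorphism, it suffices to show that $\mathcal{K}(\mathcal{H}^\dagger)$ is computable.

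\textbf{Step 2 (generated points are finite matrices).} Let $v$ be a nonzero rational vector of $\mathcal{H}^\dagger$, say $v=\sum_{n\le N} c_n e_n$ with $c_n\in\mathbb{Q}(i)$. Then $\norm{v}^2=\sum\abs{c_n}^2$ is rational. Relative to $e_0,\dots,e_N$, the projection $P_v$ has matrix $(c_i\overline{c_k}/\norm{v}^2)_{i,k}$, and it vanishes on the orthogonal complement of $E_N=\operatorname{span}\{e_0,\dots,e_N\}$.

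Now take a generated point $p(P_{v_0},\dots,P_{v_m})$, where $p$ is a rational $*$-polynomial without constant term. Choose $N$ large enough that every $v_i$ lies in $E_N$. The operator then leaves $E_N$ invariant and is zero on $E_N^\perp$, because $p$ has no constant term. So it is represented exactly by a matrix $M$ with entries in $\mathbb{Q}(i)$, and $M$ is computable from the code of the generated point.

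\textbf{Step 3 (computing the norm).} The norm of the generated point is $\norm{M}=\sqrt{\lambda_{\max}(M^*M)}$. Here $M^*M$ is a Hermitian matrix with entries in $\mathbb{Q}(i)$, so its characteristic polynomial has rational coefficients and only real roots. The largest root of such a polynomial is a computable real, uniformly in the polynomial, for instance by Sturm sequences or by bisection with sign counting. Hence the norm of each generated point is computable uniformly in its code, which means $\mathcal{K}(\mathcal{H}^\dagger)$ is computable. By Step 1, $\mathcal{K}(\mathcal{H}^\#)$ is computable.

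The main obstacle I expect is Step 1. It requires checking that the identity map $\mathcal{H}^\#\to\mathcal{H}^\dagger$ is genuinely computable, using Parseval to get the tail bound. It also requires citing the standard fact that a presentation computably isomorphic to a computable presentation is itself computable. The finite-dimensional case of $\mathcal{H}$ is handled by the same argument with a finite basis.
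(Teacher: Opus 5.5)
Your proof is correct and follows essentially the same route as the paper. Both pass to the presentation $\mathcal{H}^\dagger$ whose special points are a computable orthonormal basis, transfer along the identity via Lemma~\ref{lem:IdentityMapLift}, and note that each generated point of $\mathcal{K}(\mathcal{H}^\dagger)$ is supported on some $\operatorname{span}\{e_0,\dots,e_N\}$ and vanishes on its orthogonal complement.

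The differences are minor. You compute the norm exactly as $\sqrt{\lambda_{\max}(M^*M)}$ for a $\mathbb{Q}(i)$-matrix $M$, whereas the paper takes a supremum over the computably compact closed unit ball of that span. Your Parseval argument also explicitly justifies the computability of the identity $\mathcal{H}^\#\to\mathcal{H}^\dagger$, which is the direction Lemma~\ref{lem:IdentityMapLift} actually needs; the paper's one-line justification directly gives only the opposite direction.
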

\begin{proof}
The proofs in the infinite-dimensional case and the finite-dimensional case are essentially identical, so we use the notation of the infinite-dimensional case.

Using Fact \ref{fact:ONB}, let $(e_n)_{n \in \mathbb{N}}$ be a computable orthonormal basis for $\mathcal{H}^\#$.  

Let $\mathcal{H}^\dagger$ be the presentation of $\mathcal{H}$ whose $n$th special point is $e_n$ and let $\xi_j$ denote the $j$th rational vector of $\mathcal{H}^\dagger$.  Since $(e_n)_{n\in\mathbb{N}}$ is a computable sequence of $\mathcal{H}^\#$, the identity map is a computable map from $\mathcal{H}^\#$ to $\mathcal{H}^\dagger$.  By Lemma \ref{lem:IdentityMapLift} the identity map is a computable map from $\mathcal{K}(\mathcal{H}^\#)$ to $\mathcal{K}(\mathcal{H}^\dagger)$ and thus is a computable *-isomorphism. Therefore, it suffices to show that $\mathcal{K}(\mathcal{H}^\dagger)$ is a computable presentation.

Let $\rho$ be a generated point of $\mathcal{K}(\mathcal{H}^\dagger)$.  Then we can write $\rho = p(P_{\xi_0}, \ldots, P_{\xi_m})$ for some rational $*$-polynomial $p$ and some $m$.  From the computability of the orthonormal basis $(e_n)_{n \in \mathbb{N}}$ we can compute the coefficients of each $\xi_j$ with respect to that orthonormal basis, and from this and $p$ it is possible to compute another *-polynomial $q$ and an $N$ such that $\rho = q(P_{e_0}, \ldots, P_{e_N})$.  Since the closed unit ball of the span of $\{e_0, \ldots, e_N\}$ is a computably compact set of $\mathcal{H}^\dagger$, this expression allows us to calculate $\norm{\rho}$.
\end{proof}

All separable Hilbert spaces are computably categorical (see \cite[Corollary 3.7]{BrattkaYoshikawa}), so between any two computable presentations of separable Hilbert spaces of the same dimension there is a computable isometric isomorphism.  We now show that computable isometric isomorphisms between Hilbert spaces lift to computable *-isomorphisms on the algebras of compact operators.

\begin{lemma}\label{lem:compConjugation}
Let $\mathcal{H}_0^\#$ and $\mathcal{H}_1^\#$ be computable presentations of Hilbert spaces $\mathcal{H}_0$ and $\mathcal{H}_1$ and let $T : \mathcal{H}_0 ^\#\to \mathcal{H}_1^\#$ be a computable isometric isomorphism.  Then the map $\Phi_T : \mathcal{K}(\mathcal{H}_0) \to \mathcal{K}(\mathcal{H}_1)$ defined by $\Phi_T(S) = TST^{-1}$ is a computable *-isomorphism from $\mathcal{K}(\mathcal{H}_0^\#)$ to $\mathcal{K}(\mathcal{H}_1^\#)$.
\end{lemma}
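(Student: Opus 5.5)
First I would record the classical facts. $\Phi_T$ is a *-isomorphism: conjugation by a unitary is multiplicative, preserves adjoints because $T^* = T^{-1}$, preserves compactness, and has inverse $\Phi_{T^{-1}}$. Moreover $\Phi_T$ is isometric, since $\norm{TST^{-1}} = \norm{S}$. So computability is the only issue. By the same reasoning as in the proof of Theorem \ref{theorem:CompFuncCalc}, it suffices to show that $\Phi_T$ is computable on the special points of $\mathcal{K}(\mathcal{H}_0^\#)$, uniformly in the index. Indeed, a *-homomorphism that uniformly sends special points to computable points sends each generated point $p(P_{\rho_0},\ldots,P_{\rho_n})$ to $p(\Phi_T(P_{\rho_0}),\ldots,\Phi_T(P_{\rho_n}))$, which is uniformly computable because the presentation $\mathcal{K}(\mathcal{H}_1^\#)$ is computable by Proposition \ref{prop:PresentationOfCompacts}. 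Being isometric, $\Phi_T$ then extends computably to all points.

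The key identity is $\Phi_T(P_v) = P_{Tv}$ for nonzero $v \in \mathcal{H}_0$. To see it, note $TP_vT^{-1}$ is a self-adjoint idempotent, since $T$ is unitary. It has range $T(\mathbb{C}v) = \mathbb{C}Tv$. So it is the orthogonal projection onto $Tv$. Applied to the $j$th special point $P_{\rho_j}$ of $\mathcal{K}(\mathcal{H}_0^\#)$, where $\rho_j$ is the $j$th rational vector of $\mathcal{H}_0^\#$, this gives $\Phi_T(P_{\rho_j}) = P_{T\rho_j}$.

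Since $T$ is a computable map from $\mathcal{H}_0^\#$ to $\mathcal{H}_1^\#$, the vector $T\rho_j$ is a computable point of $\mathcal{H}_1^\#$, uniformly in $j$. It is nonzero whenever $\rho_j$ is, because $T$ is isometric. If a rational vector happens to be $0$, we interpret $P_0 = 0$; this degenerate case causes no trouble, and in that case Lemma \ref{lem:ProjectionNorm} need not be invoked. By Corollary \ref{cor:ComputePtProj}, an index of $P_{T\rho_j}$ in $\mathcal{K}(\mathcal{H}_1^\#)$ is computable from an index of $T\rho_j$, hence from $j$. This yields uniform computability on special points and finishes the argument.

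Computability of $\Phi_T^{-1} = \Phi_{T^{-1}}$ follows symmetrically. The inverse of a computable isometric isomorphism between computable presentations is computable: approximate $T^{-1}\xi$ by searching for a rational vector $\rho$ with $\norm{T\rho - \xi}$ small. The only step needing care is Corollary \ref{cor:ComputePtProj}, which relies on Lemma \ref{lem:ProjectionNorm} and so requires a positive lower bound on $\norm{T\rho_j}$. We can compute such a bound once we know $\rho_j \neq 0$, because $\norm{T\rho_j} = \norm{\rho_j}$ and the latter is computable.
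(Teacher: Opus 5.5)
Your proposal follows the paper's proof. You use that $\Phi_T$ is a *-homomorphism to reduce to the special points, apply the identity $\Phi_T(P_{\rho_j}) = P_{T\rho_j}$, and get computability of $P_{T\rho_j}$ from computable approximations of $T\rho_j$ via Lemma \ref{lem:ProjectionNorm} (Corollary \ref{cor:ComputePtProj}). Your extra remarks on $\Phi_T^{-1}$ and on the lower bound for $\norm{T\rho_j}$ go beyond what the paper writes; note only that ``once we know $\rho_j \neq 0$'' tacitly assumes it is decidable whether a rational vector is zero, an assumption the paper's definition of $\mathcal{K}(\mathcal{H}^\#)$ also leaves implicit.
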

\begin{proof}
Since $\Phi_T$ is a $*$-homomorphism, to show that $\Phi_T$ is computable it suffices to show that $\Phi_T$ is computable on the special points of $\mathcal{K}(\mathcal{H}_0^\#)$.  So we consider a special point, which has the form $P_\xi$ for some special point $\xi$ of $\mathcal{H}_0^\#$.  Then $\Phi_T(P_\xi) = P_{T(\xi)}$.  Since $T$ is computable, we can effectively approximate $T(\xi)$ arbitrarily well by rational vectors of $\mathcal{H}_1^\#$. Then it follows from Lemma \ref{lem:ProjectionNorm} that $P_{T(\xi)}$ is a computable point of $K(\mathcal{H}_1^\#)$.
\end{proof}

For each $n \in \mathbb{N}$, let $\ell^2_n$ be the $n$-dimensional Euclidean space, and let $\ell^2$ be the infinite-dimensional separable Hilbert space of square-summable sequences.  Each of these spaces has a standard computable presentation whose special points are given by the standard orthonormal basis for that space.  Using these presentations, the following is immediate from Lemma \ref{lem:compConjugation} and the computable categoricity of Hilbert spaces.

\begin{proposition}
Suppose that $\mathcal{H}^\#$ is computable.
\begin{enumerate}
\item{If $\dim(\mathcal{H}) = n$, then $\mathcal{K}(\mathcal{H}^\#)$ is computably isomorphic to $\mathcal{K}(\ell_n^2)$.}
\item{If $\mathcal{H}$ is infinite-dimensional, then $\mathcal{K}(\mathcal{H}^\#)$ is computably isomorphic to $\mathcal{K}(\ell^2)$.}
\end{enumerate}
\end{proposition}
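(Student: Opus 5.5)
The plan is to combine computable categoricity of separable Hilbert spaces with Lemma \ref{lem:compConjugation}. First we fix the dimension. Suppose $\dim(\mathcal{H}) = n$, with $n$ finite or $n = \infty$, and write $\mathcal{H}_n$ for $\ell^2_n$ or $\ell^2$ accordingly, equipped with its standard presentation whose special points are the standard orthonormal basis. This standard presentation is computable, since inner products and norms of rational combinations of an orthonormal basis are computed directly. By \cite[Corollary 3.7]{BrattkaYoshikawa}, any two computable presentations of separable Hilbert spaces of the same dimension admit a computable isometric isomorphism between them. So there is a computable isometric isomorphism $T : \mathcal{H}^\# \to \mathcal{H}_n$.

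Concretely, $T$ comes from Fact \ref{fact:ONB}. Take a computable orthonormal basis $(e_k)$ of $\mathcal{H}^\#$ and send $e_k$ to the $k$th standard basis vector. The map is computable because $\langle v, e_k\rangle$ is computable from $v$ and Parseval's identity gives effective tail bounds. Its inverse is computable because it is an isometry that is computable on a dense generating set.

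Next we apply Lemma \ref{lem:compConjugation} with $\mathcal{H}_0^\# = \mathcal{H}^\#$, $\mathcal{H}_1^\# = \mathcal{H}_n$ and this $T$. The lemma gives that $\Phi_T(S) = TST^{-1}$ is a computable $*$-isomorphism from $\mathcal{K}(\mathcal{H}^\#)$ to $\mathcal{K}(\mathcal{H}_n)$. For the inverse, apply the same lemma to $T^{-1}$, which is also a computable isometric isomorphism; this gives that $\Phi_{T^{-1}} = \Phi_T^{-1}$ is computable. Hence $\Phi_T$ is a computable isomorphism in both directions, which proves (1) and (2). Proposition \ref{prop:PresentationOfCompacts} guarantees that both presentations of the compact operators are computable, so the notion of computable isomorphism applies as usual.

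The only step that needs any care is checking that the hypotheses of Lemma \ref{lem:compConjugation} really hold. This means confirming that $T^{-1}$ is computable, and that $\mathcal{K}(\ell^2_n)$ is read with respect to the presentation induced by the standard basis presentation. Both points are routine, so the proposition is essentially immediate.
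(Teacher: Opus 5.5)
Your proposal is correct and is essentially the paper's argument. The paper also derives the proposition directly from computable categoricity of separable Hilbert spaces, which supplies a computable isometric isomorphism $T$ from $\mathcal{H}^\#$ onto $\ell^2_n$ or $\ell^2$ with its standard presentation, combined with Lemma \ref{lem:compConjugation}; your extra details (building $T$ explicitly from a computable orthonormal basis, and applying the lemma to $T^{-1}$ so that $\Phi_T^{-1} = \Phi_{T^{-1}}$ is also computable) are sound elaborations of that same route.
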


We have thus shown that computable presentations of $\mathcal{K}(\mathcal{H})$ arising from computable presentations of $\mathcal{H}$ are all computably isomorphic.  By \cite[Theorem 4.18]{UHFPaper}, all unital UHF algebras are computably categorical.  In the infinite-dimensional case, the compact operators $\mathcal{K}(\mathcal{H})$ form a \emph{non-unital} UHF algebra.  The methods from \cite{UHFPaper} do not apply to non-unital UHF algebras. Nevertheless, \cite[Theorem 4.18]{UHFPaper} and Lemma \ref{lem:compConjugation} together suggest the possibility of a positive answer to:

\begin{question}
Is $\mathcal{K}(\mathcal{H})$ computably categorical?    
\end{question}

\subsection{Computably compact operators}
Classically, compact operators on Hilbert spaces can be equivalently characterized as limits of finite-rank operators or as operators where the closure of the image of the closed unit ball is compact.  Our definition of $\mathcal{K}(\mathcal{H}^\#)$ is based on the former description.  The latter also has a natural computability-theoretic version.

\begin{definition}
Let $T$ be an operator on $\mathcal{H}$.  We say that $T$ is a \emph{computably image-compact operator of $\mathcal{H}^\#$} if $T$ is a computable operator of $\mathcal{H}^\#$ and $\overline{T[\overline{B}(0_\mathcal{H}; 1)]}$ is a computably compact set of $\mathcal{H}^\#$.
\end{definition}

A note on terminology is in order.  In \cite{BrattkaDillhage.2007}, the term ``computably compact operator" is used in the context of Banach spaces with the approximation property to mean a computable operator that can be computably approximated by finite-rank operators; in our setting, this is notion corresponds to computable points of $\mathcal{K}(\mathcal{H}^\#)$.  The notion of computable image-compactness, defined above, makes sense even in the context of Banach spaces without the approximation property.  We remain in the context of operators on Hilbert spaces, where our next goal is to show that computably image-compact operators of $\mathcal{H}^\#$ are exactly the computable points of $K(\mathcal{H}^\#)$, thus giving a computability analog of the classical equivalence between the definitions of compact operators on Hilbert spaces.

\begin{lemma}\label{lemma:Adjoint}
Suppose that $T$ is a computable finite-rank operator of $\mathcal{H}$.  Then the adjoint $T^*$ is also computable.  Moreover, a code for $T^*$ can be computed from a code for $T$ and the rank of $T$.
\end{lemma}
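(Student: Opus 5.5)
The idea is to use the rank to effectively find a finite orthonormal basis of the range of $T$, and then write down $T^*$ explicitly as a finite sum of rank one operators built from that basis and the vectors $T^*e_i$, which can be computed via the Riesz representation. Let $r$ be the rank of $T$. Since $T$ is a computable operator of $\mathcal{H}^\#$, the sequence $(T\rho_j)_{j\in\mathbb{N}}$ is computable and its span is dense in $\operatorname{ran}(T)$, which is closed because $T$ has finite rank.

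\textbf{Step 1: a basis of the range.} Search for indices $j_1,\dots,j_r$ such that $T\rho_{j_1},\dots,T\rho_{j_r}$ are linearly independent. Linear independence of finitely many computable vectors is c.e.: it is witnessed by the Gram determinant $\det(\langle T\rho_{j_s},T\rho_{j_t}\rangle)_{s,t}$ being nonzero, and we can confirm this by approximating the determinant until it is bounded away from $0$. Because the rational vectors are dense and $T$ is continuous, such a tuple exists, so the search halts. Knowing $r$ is exactly what tells us when to stop: $r$ independent vectors in an $r$-dimensional space span $\operatorname{ran}(T)$. This is the step I expect to be the real obstacle, and the reason the rank is needed as input. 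Without $r$ we could never certify that the span is exhausted, since lower bounds on the rank are c.e. but upper bounds are not.

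\textbf{Step 2: orthonormalize.} Apply Gram–Schmidt to $T\rho_{j_1},\dots,T\rho_{j_r}$. The norms involved are bounded away from zero by independence, so this produces computable orthonormal vectors $e_1,\dots,e_r$ spanning $\operatorname{ran}(T)$.

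\textbf{Step 3: the formula for $T^*$.} For any $x$ we have $T^*x = T^*P x$, where $P$ is the orthogonal projection onto $\operatorname{ran}(T)$. Hence
\[T^*x=\sum_{i=1}^r \langle x,e_i\rangle\, T^*e_i.\]
So it suffices to show that each $w_i = T^*e_i$ is a computable vector.

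\textbf{Step 4: computing $w_i$.} We have $\langle \rho_j, w_i\rangle = \langle T\rho_j, e_i\rangle$, and these inner products are computable uniformly in $j$. Moreover $\norm{w_i}\le\norm{T}$, and a rational upper bound for $\norm{T}$ is available from the code of $T$, since computing a computable operator on a unit vector gives a modulus of continuity. To compute $w_i$, take a computable orthonormal basis $(f_n)$ of $\mathcal{H}^\#$ from Fact \ref{fact:ONB}. Then $\langle f_n, w_i\rangle = \langle Tf_n, e_i\rangle$ is computable, and $w_i=\sum_n \overline{\langle Tf_n,e_i\rangle}\,f_n$.

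To control the tail, use $\norm{w_i}^2=\langle TT^*e_i,e_i\rangle$. This is not obviously computable, so instead we use a second finite-rank trick. Since $w_i=T^*e_i$ lies in $\operatorname{ran}(T^*)=\ker(T)^\perp$, which also has dimension $r$, we can also search for $r$ rational vectors $\rho_{k_1},\dots,\rho_{k_r}$ whose images under the Gram-type matrix $(\langle T\rho_{k_s},e_i\rangle)_{s,i}$ are nonsingular. The hope is to locate $w_i$ exactly inside an $r$-dimensional computable subspace. The difficulty is that $\ker(T)^\perp$ need not be spanned by rational vectors, so this would only give approximations.

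A cleaner route, which I would try first, is to show that $\norm{w_i}$ is computable directly. It is the supremum of the computable values $|\langle T\rho_j,e_i\rangle|/\norm{\rho_j}$, so it is left-c.e. For the upper bound, observe that $\norm{w_i}^2=\sum_n|\langle Tf_n,e_i\rangle|^2$. I would try to obtain upper bounds from $\sum_{i}\norm{w_i}^2=\operatorname{tr}(TT^*)=\operatorname{tr}(T^*T)=\sum_{i}\norm{T^* e_i}^2$, which is circular. Instead, use the Hilbert–Schmidt identity $\sum_n\norm{Tf_n}^2=\sum_i\norm{w_i}^2$, noting that $\norm{Tf_n}^2=\sum_i|\langle Tf_n,e_i\rangle|^2$. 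If the partial sums of $\norm{Tf_n}^2$ can be effectively bounded, then the tails of all $w_i$ are controlled at once, and $w_i$ becomes computable, hence so does $T^*$ via the formula in Step 3. Bounding this tail is where the main effort will go.
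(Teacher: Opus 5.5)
Your Steps 1--3 are correct, and Step 1 is not where the difficulty lies. Together they reduce everything to showing that each $w_i = T^*e_i$ is a computable vector. The gap is in Step 4, and you located it accurately. The coefficients $\langle w_i, f_n\rangle = \overline{\langle Tf_n, e_i\rangle}$ are computable, but $\norm{w_i}$ is only approximable from below. The proposal ends without the upper bound needed to control the tail. The Hilbert--Schmidt route has the same defect, since $\sum_n \norm{Tf_n}^2$ is also only approximable from below.

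No argument can close this gap, because the statement is false as formulated. Work in $\ell^2$ with its standard presentation and fix a computable pairing bijection $\pi$. Let $c_{\pi(n,s)} = 2^{-n}$ if $n$ enters $\emptyset'$ exactly at stage $s$, and $c_{\pi(n,s)} = 0$ otherwise. Put $v = \sum_m c_m e_m$ and $Tx = \langle x, v\rangle e_0$. On rational vectors $T$ is a finite computable sum, and $\norm{T} = \norm{v} \leq 2$, so $T$ is a computable operator of rank $1$. But $T^*e_0 = v$, and $\norm{v}^2 = \sum_{n \in \emptyset'} 4^{-n}$ is not computable, since good approximations to it would decide $\emptyset'$. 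So $v$ is not a computable vector, and $T^*$ is not computable. Note that here $\sum_n \norm{Tf_n}^2 = \norm{v}^2$, so the quantity whose tail you hoped to bound is itself noncomputable.

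The paper's own proof does not get around this either. Its formula $T^*(x) = \sum_{i,j} G_{i,j}\langle x, T(e_{n_j})\rangle e_{n_i}$ has range inside $\operatorname{span}\{e_{n_1}, \ldots, e_{n_K}\}$. But $\operatorname{ran}(T^*) = \ker(T)^\perp$ is in general not spanned by basis vectors. For $Tx = \langle x, (e_1+e_2)/\sqrt{2}\rangle e_0$, the formula returns $\sqrt{2}\langle x, e_0\rangle e_{n_1}$ instead of $\langle x, e_0\rangle (e_1+e_2)/\sqrt{2}$. The correct Gram-matrix formula has $T^*Te_{n_i}$ in place of $e_{n_i}$, which is circular.

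What is genuinely needed is a source of upper bounds on $\norm{T^*y}$. Wherever the paper invokes this lemma, the operator in question is computably image-compact, and that suffices. If $C = \overline{T[\overline{B}(0_{\mathcal{H}};1)]}$ is computably compact, then $\norm{T^*y} = \max_{z \in C}\abs{\langle z, y\rangle}$ is computable uniformly in $y$. Together with the computable coefficients $\langle T^*y, f_n\rangle = \langle y, Tf_n\rangle$, this makes $T^*y$ computable uniformly in $y$. With that extra hypothesis your Step 4 goes through, and finite rank is not even needed.
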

\begin{proof}
Let $K$ be the rank of $T$.  Using Fact \ref{fact:ONB}, let $(e_n)_{n \in \mathbb{N}}$ be a computable orthonormal basis of $\mathcal{H}^\#$.  

We search for indices $n_1, \ldots, n_K$ such that $\{T(e_{n_1}), \ldots, T(e_{n_K})\}$ is linearly independent.  To do this, given any indices $n_1, \ldots, n_K$, recall that $\{T(e_{n_1}), \ldots, T(e_{n_K})\}$ is linearly independent if and only if the matrix $B_{n_1, \ldots, n_K}$ whose $i,j$ entry is $\langle T(e_{n_i}), T(e_{n_j}) \rangle$ has strictly positive determinant.  Fixing a computable enumeration of $\mathbb{N}^K \times \mathbb{Q}_{>0}$, for each $(n_1, \ldots, n_K, \epsilon)$ we compute the determinant of $G_{n_1, \ldots, n_K}$ to within $\epsilon$; if the result guarantees that the determinant is strictly positive, the search is complete.  Since the rank of $T$ is exactly $K$, the search will eventually terminate.

Now we have a basis $\{T(e_{n_1}), \ldots, T(e_{n_K})\}$ for the image of $T$.  Let $G = B_{n_1, \ldots, n_K}^{-1}$, and denote the entry in position $(i, j)$ of $G$ by $G_{i,j}$.  Then for any $x \in \mathcal{H}$,
\[T^*(x) = \sum_{i=1}^K\sum_{j=1}^KG_{i,j}\langle x, T(e_{n_j}) \rangle e_{n_i}.\]
\end{proof}

\begin{lemma}\label{lem:FiniteRankApproximation1}
Suppose that $T$ is a computably image-compact operator of $\mathcal{H}^\#$, and fix $k \in \mathbb{N}$ and a computable orthonormal basis $(e_n)_{n\in\mathbb{N}}$ of $\mathcal{H}^\#$.  Then we can compute $K$, $n_1, \ldots, n_K$ such that $\norm{T - \sum_{i=1}^KP_{e_n} \circ T} < 2^{-k}$.
\end{lemma}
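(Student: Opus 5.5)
Write $C = \overline{T[\overline{B}(0_\mathcal{H};1)]}$.  The statement presumably means $\norm{T - \sum_{i=1}^K P_{e_{n_i}}\circ T} < 2^{-k}$.  Let $Q$ be the orthogonal projection onto $\operatorname{span}\{e_{n_1},\dots,e_{n_K}\}$, so that $\sum_i P_{e_{n_i}} = Q$.  Then $\norm{T - QT} = \sup_{y \in C}\norm{y - Qy}$, since $y \mapsto \norm{y-Qy}$ is continuous.  So the task is to find finitely many basis vectors whose span approximates every point of $C$ to within $2^{-k}$.  We may take $n_i = i$, i.e.\ $Q = Q_K$ is the projection onto the first $K$ basis vectors, and we only need to compute a suitable $K$.

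The plan has three steps.
\begin{enumerate}
\item Use the computable compactness of $C$ to compute a finite $2^{-(k+2)}$-net of $C$.  Concretely, we effectively obtain rational vectors $v_1,\dots,v_m$ of $\mathcal{H}^\#$ such that the balls $B(v_j;2^{-(k+2)})$ cover $C$.  We may also assume each of these balls meets $C$, as in a standard computable compactness presentation; strictly, this is not needed.
\item Each $v_j$ is a computable point.  Since $(e_n)_{n\in\N}$ is a computable orthonormal basis, the quantities $\norm{v_j - Q_K v_j}^2 = \norm{v_j}^2 - \sum_{n<K}\abs{\langle v_j,e_n\rangle}^2$ are computable reals, uniformly in $j$ and $K$, and they tend to $0$ as $K\to\infty$.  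Search through $K$ and rational precisions until we certify $\norm{v_j - Q_Kv_j} < 2^{-(k+2)}$ for all $j \le m$.  This search terminates by Parseval.
\item For $y\in C$, pick $j$ with $\norm{y-v_j}<2^{-(k+2)}$.  Since $I-Q_K$ has norm at most $1$,
\[\norm{y-Q_Ky} \le \norm{(I-Q_K)(y-v_j)} + \norm{v_j-Q_Kv_j} < 2\cdot 2^{-(k+2)}.\]
Taking the supremum over the compact set $C$ gives $\norm{T-Q_KT} \le 2^{-(k+1)} < 2^{-k}$.
\end{enumerate}

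The main obstacle is step 1: extracting an actual finite rational net from the computable compactness of $C$.  That notion only guarantees that we can enumerate finite covers by rational balls and recognize when one covers $C$.  The fix is to search for a finite tuple of rational balls of radius $2^{-(k+2)}$ that is certified to cover $C$.  Such a tuple exists because $C$ is compact, and the centers serve as the net; the covering property is exactly what step 3 uses.  The computability of $T$ is not needed beyond knowing $C$ is computably compact.  The upper bound $\sup_{y\in C}$ is what matters, so no lower semicomputation of norms is required.
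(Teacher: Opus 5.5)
Your proof is correct, but it is organized differently from the paper's. The paper never extracts a net. It notes that for each finite $F$,
\[\norm{T - \sum_{n\in F}P_{e_n}\circ T} = \sup_{v\in C}\norm{v - \sum_{n\in F}P_{e_n}(v)}\]
is the supremum of a computable function over the computably compact set $C$, and hence is a computable real uniformly in $F$. It then searches over finite $F$ until this real is certified to be below $2^{-k}$. Termination comes from the classical fact, cited from Conway, that $\sum_{n<N}P_{e_n}\circ T\to T$ in norm for compact $T$; the c.e.\ set $G$ in that proof plays no essential role. You instead use computable compactness only once, to obtain a $2^{-(k+2)}$-net, and so reduce everything to finitely many computable vectors. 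There Parseval gives termination and the triangle inequality gives the bound. Your route is more elementary and self-contained in two ways. First, it needs only the covering half of computable compactness: you only need upper bounds, never lower bounds on the supremum, so the c.e.\ closedness of $C$ is not used. Second, the net argument itself proves the norm convergence that the paper imports from Conway. The paper's route, by contrast, establishes a stronger intermediate fact: the exact approximation error $\norm{T - \sum_{n\in F}P_{e_n}\circ T}$ is computable uniformly in $F$. That makes the search transparently effective and lets one evaluate any candidate $F$. The price is invoking the general principle that suprema of computable functions over computably compact sets are computable.
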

\begin{proof}
By Lemma \ref{lem:ONB.ce.closed}, $\{e_n : n \in \mathbb{N}\}$ is c.e. closed.  Let $G = \{n \in \mathbb{N} : \operatorname{range}(P_{v_n} \circ T) \neq \{0_{\mathcal{H}}\}\}$.  Note that $G$ is a c.e. set.

Suppose $F \subseteq G$ and $F$ is finite.  We claim that $\norm{T - \sum_{n \in F}P_{e_n} \circ T}$ is a computable real number, uniformly in $F$.  To see this, let $C = \overline{T[\overline{B}(0_{\mathcal{H}}; 1)]}$.  Then
\[\norm{T - \sum_{n \in F}P_{e_n} \circ T} = \sup_{v \in C}\norm{v - \sum_{n \in F}P_{e_n}(v)}.\]
Since $e_n$ is a computable vector of $\mathcal{H}^\#$, uniformly in $n$, $\operatorname{Id}_{\mathcal{H}} - \sum_{n \in F}P_{e_n}$ is a computable operator of $\mathcal{H}^\#$, uniformly in $F$.  By hypothesis, $C$ is a computably compact set of $\mathcal{H}^\#$, so it follows that $\norm{T - \sum_{n \in F}P_{e_n} \circ T}$ is a computable real number, uniformly in $F$.

We now search for a finite set $F \subseteq G$ such that $\norm{T - \sum_{n \in F}P_{e_n} \circ T} < 2^{-k}$.  The previous paragraph shows that this search is effective and, by \cite[Corollary 4.5]{Conway.1990}, it terminates.
\end{proof}

\begin{lemma}
Suppose that $T$ is a computably image-compact operator of $\mathcal{H}^\#$.  Then the adjoint $T^*$ is a computably image-compact operator of $\mathcal{H}^\#$ as well, and a code for $T^*$ can be computed from a code for $T$.
\end{lemma}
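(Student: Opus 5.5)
We first show that $T^*$ is a computable operator, and then that $\overline{T^*[\overline{B}(0_\mathcal{H};1)]}$ is computably compact. Both steps go through finite-rank approximations of $T$ supplied by Lemma \ref{lem:FiniteRankApproximation1}.

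Fix a computable orthonormal basis $(e_n)_{n\in\mathbb{N}}$ of $\mathcal{H}^\#$ (Fact \ref{fact:ONB}). Given $k$, Lemma \ref{lem:FiniteRankApproximation1} lets us compute a finite set $F_k$ with $\norm{T - Q_k T} < 2^{-k}$, where $Q_k = \sum_{n\in F_k} P_{e_n}$. The operator $T_k = Q_k T$ is computable uniformly in $k$. Its adjoint has the explicit form
\[T_k^* = T^* Q_k = \sum_{n \in F_k} \langle \cdot, e_n\rangle\, T^*(e_n),\]
and since $\norm{T^* - T_k^*} = \norm{T - T_k} < 2^{-k}$, it suffices to compute the finitely many vectors $T^*(e_n)$ for $n \in F_k$.

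To compute $T^*(e_n)$, we expand it in the basis. Its $m$th coordinate is $\langle T^*e_n, e_m\rangle = \langle e_n, Te_m\rangle$, which is computable uniformly in $n$ and $m$. The norm is computable because
\[\norm{T^* e_n} = \sup_{v\in C}\abs{\langle v, e_n\rangle},\]
where $C = \overline{T[\overline{B}(0_\mathcal{H};1)]}$. This supremum is computable because $C$ is computably compact and $v \mapsto \abs{\langle v,e_n\rangle}$ is a computable function. A vector whose coordinates and norm are both computable is a computable point: we truncate the coordinate expansion once the partial sum of $\abs{\text{coord}}^2$ is within the desired error of $\norm{T^*e_n}^2$. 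Hence each $T_k^*$ is computable uniformly in $k$, and $T^*$ is the effective uniform limit of the $T_k^*$. So $T^*$ is computable, and a code for it is obtained uniformly from a code for $T$ together with a code for $C$.

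Next we show that $D = \overline{T^*[\overline{B}(0_\mathcal{H};1)]}$ is computably compact. Each $T_k^*$ has range inside $W_k = \operatorname{span}\{T^*e_n : n\in F_k\}$. The set $D_k = \overline{T_k^*[\overline{B}(0_\mathcal{H};1)]}$ is the image of the closed unit ball of the finite-dimensional space $\operatorname{span}\{e_n : n \in F_k\}$, because $T_k^* = T_k^* Q_k$. That ball is computably compact, and computable maps preserve computable compactness, so $D_k$ is computably compact uniformly in $k$. The Hausdorff distance between $D$ and $D_k$ is at most $\norm{T^*-T_k^*} < 2^{-k}$. To produce a finite $2^{-j}$-cover of $D$, we take $k = j+1$, compute a finite rational $2^{-(j+1)}$-net of $D_k$, and keep only those net points that lie within $2^{-(j+1)}$ of $D$. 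We also need $D$ to be c.e. closed, so that we can recognize which rational balls meet it. This holds because $D$ is the closure of the image of a dense computable sequence (the rational vectors of the unit ball) under the computable map $T^*$.

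The main obstacle is the bookkeeping at the end: passing from uniformly computably compact approximants $D_k$ to computable compactness of $D$. Depending on the definition of computable compactness used in \cite{EMST}, the cleanest route may be to show that $D$ is c.e. closed and also has a computable finite-net function. The net function comes from the $D_k$ with a loss of only a factor of $2$ in the radius.
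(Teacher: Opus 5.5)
Your argument is correct, and it differs from the paper's in where the adjoint comes from.

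\textbf{The paper's route.} It takes the same approximant $S=\sum_{i=1}^K P_{e_{n_i}}\circ T$ from Lemma \ref{lem:FiniteRankApproximation1} and states that $S$ has rank $K$. It then invokes Lemma \ref{lemma:Adjoint} to get $S^*$ computable, and concludes from $\norm{T^*-S^*}=\norm{T-S}<2^{-k}$. It never separately checks that $\overline{T^*[\overline{B}(0_\mathcal{H};1)]}$ is computably compact.

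\textbf{Your route.} You compute $S^*=T^*Q_k$ by computing each vector $T^*e_n$ from its coordinates $\langle e_n,Te_m\rangle$ and its norm $\sup_{v\in C}\abs{\langle v,e_n\rangle}$. So the computable compactness of $C$ is used a second time, to supply the norms. You then prove the image-compactness half explicitly, by Hausdorff approximation with the sets $D_k$.

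\textbf{What each buys.} The paper's route is shorter, but yours is the more robust one, for two reasons.
\begin{itemize}
\item The rank of $S$ is in general only bounded by $K$. For $T=P_v$ with $v=(e_0+e_1)/\sqrt{2}$ and $k\geq 1$, every admissible $F$ contains $0$ and $1$, yet $S=P_v$ has rank $1$.
\item A code plus the rank is not enough to compute the adjoint of a finite-rank operator. Take $x\mapsto\langle x,w\rangle e_0$ on $\ell^2$, where $w$ has computable coordinates but non-computable norm; its adjoint sends $e_0$ to $w$, which is not computable.
\end{itemize}
So the norm information you extract from $C$ is exactly what that step needs. In fact your computation of $T^*e_n$ works verbatim for an arbitrary input $x$. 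Hence computability of $T^*$ does not need the finite-rank approximants at all; they are only needed for the compactness of $D$.

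\textbf{One repair in your last step.} Do not filter the net. Whether a net point lies within $2^{-(j+1)}$ of $D$ is only semi-decidable, and discarding points can destroy the cover. The filtering is also unnecessary. Take $k=j+1$; any $x$ with $d(x,D_k)<2^{-(j+1)}$ then satisfies $d(x,D)<2^{-(j+1)}+2^{-k}=2^{-j}$. So the unfiltered net of $D_k$ is already a $2^{-j}$-net of $D$ whose centres lie within $2^{-j}$ of $D$.
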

\begin{proof}
Using Fact \ref{fact:ONB}, let $(e_n)_{n\in\mathbb{N}}$ be a computable orthonormal basis of $\mathcal{H}^\#$, and fix $k \in \mathbb{N}$.  Using Lemma \ref{lem:FiniteRankApproximation1}, find $K$, $n_1, \ldots, n_K$ be such that $\norm{T - \sum_{i=1}^{K}P_{e_i}\circ T} < 2^{-k}$.  Let $S = \sum_{i=1}^KP_{e_i} \circ T$.  Then $S$ is a computable operator of rank $K$, so by Lemma \ref{lemma:Adjoint} $S^*$ is computable, and we can compute a code for $S^*$ from $K$ (which we have already computed) and a code for $S$, which in turn we can compute from a code for $T$.  We have $\norm{T^*-S^*} = \norm{T - S} < 2^{-k}$.
\end{proof}

\begin{lemma}\label{lem:FiniteRankApproximation2}
Suppose that $T$ is a computably image-compact operator of $\mathcal{H}^\#$, and fix $k \in \mathbb{N}$.  Then there are computable scalars $\lambda_0, \ldots, \lambda_n$ and computable vectors $v_0, \ldots, v_n$ of $\mathcal{H}^\#$ so that $\norm{T - \sum_{j \leq n}\lambda_jP_{v_j}} < 2^{-k}$.
\end{lemma}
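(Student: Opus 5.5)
We first reduce to a finite-rank operator whose range lies in a finite span of basis vectors. Using Fact \ref{fact:ONB}, fix a computable orthonormal basis $(e_n)_{n\in\mathbb{N}}$ of $\mathcal{H}^\#$. Apply Lemma \ref{lem:FiniteRankApproximation1} with $k+1$ in place of $k$. This gives $n_1,\ldots,n_K$ with $\norm{T - S} < 2^{-(k+1)}$, where $S = \sum_{i=1}^K P_{e_{n_i}}\circ T$. Then $S$ is a computable operator of rank at most $K$, and its range lies in $E=\operatorname{span}\{e_{n_1},\ldots,e_{n_K}\}$. So it suffices to write $S$, up to error $2^{-(k+1)}$, as a combination $\sum_j \lambda_j P_{v_j}$ with computable data. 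Note that $S$ itself need not be normal, so we cannot simply diagonalize it.

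For this we use polarization on matrix units. Write $S = \sum_{i} e_{n_i}\otimes S^*(e_{n_i})$, that is,
\[ S(x) = \sum_{i=1}^K \langle x, w_i\rangle e_{n_i}, \qquad w_i = S^*(e_{n_i}) = T^*(e_{n_i}). \]
By the preceding lemma, $T^*$ is computable, so each $w_i$ is a computable vector. A rank-one operator $x\mapsto \langle x,w\rangle u$ can be written exactly as a combination of four rank-one projections via polarization:
\[ \langle x,w\rangle u = \frac14\sum_{m=0}^{3} i^m \langle x, u+i^m w\rangle (u+i^m w), \]
up to the usual sign and conjugation conventions, which we would check. Each term here is $\norm{u+i^mw}^2 P_{u+i^m w}$ when $u+i^mw\neq 0$. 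So
\[ S=\sum_{i=1}^K\sum_{m=0}^3 \tfrac{i^m}{4}\norm{e_{n_i}+i^m w_i}^2\, P_{e_{n_i}+i^m w_i}. \]

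The main obstacle is the degenerate case $e_{n_i}+i^m w_i = 0$. There $P_v$ is undefined, and we cannot decide equality with $0$. To avoid this, we perturb. Choose a rational scalar $t>\norm{w_i}+1$, which is computable from an upper bound on $\norm{T}$, and replace the decomposition with $u=t e_{n_i}$ and $w=w_i/t$. Then $\norm{u+i^m w}\geq t-1>0$, so each vector is nonzero, and the product $\langle x,w\rangle u$ is unchanged. Alternatively, whenever a norm is small we can simply drop that term, since a term whose vector has norm below $\delta$ contributes at most $\delta^2/4$ to the norm. Either way, the scalars $\lambda = \frac{i^m}{4}\norm{v}^2$ and the vectors $v$ are computable uniformly in $k$.

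Since the representation of $S$ is exact, the total error is less than $2^{-(k+1)} < 2^{-k}$. This gives the computable $\lambda_0,\ldots,\lambda_n$ and $v_0,\ldots,v_n$ required by the statement.
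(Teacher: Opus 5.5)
Your proposal is correct and takes essentially the same route as the paper: approximate $T$ by $\sum_{i} P_{e_{n_i}}\circ T$ using Lemma \ref{lem:FiniteRankApproximation1}, then polarize with $w_i = T^*(e_{n_i})$ to write each piece exactly as a combination of four rank-one projections with computable coefficients. Your rescaling $u = t e_{n_i}$, $w = w_i/t$ is a genuine refinement the paper omits, since it makes each $u + i^m w$ nonzero with an explicit lower bound on its norm (so every $P_{u+i^m w}$ is defined and computable uniformly); also, your identity $\langle x,w\rangle u = \frac14\sum_{m=0}^3 i^m\langle x,u+i^m w\rangle(u+i^m w)$ is exactly right as written, so the sign check you deferred is not needed.
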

\begin{proof}
Let $(e_n)_{n \in \mathbb{N}}$ be a computable orthonormal basis for $\mathcal{H}^\#$, and use Lemma \ref{lem:FiniteRankApproximation1} to find $K$, $n_1, \ldots, n_K$ such that $\norm{T - \sum_{i=1}^KP_{e_i}\circ T} < 2^{-k}$.

Fix an index $n \in \{n_1, \ldots, n_K\}$.  Recall that for any $x, v \in \mathcal{H}$, we have the polarization identity
\begin{align*}
4\langle x, v \rangle e_n &= \norm{e_n+v}^2P_{e_n+v}(x) + \norm{e_n-v}^2P_{e_n-v}(x) \\ &+  i\norm{e_n+iv}^2P_{e_n+iv}(x) - i\norm{e_n-iv}^2P_{e_n-iv}(x).
\end{align*}
We also have, for any $x \in \mathcal{H}$,
\[(P_{e_n} \circ T)(x) = \langle T(x), e_n \rangle e_n = \langle x, T^*(e_n)\rangle e_n.\]
Using $v = T^*(e_n)$ in the polarization identity, we thus have
\begin{align*}
P_{e_n} \circ T &= \frac{\norm{e_n+T^*(e_n)}^2}{4}P_{e_n+T^*(e_n)} + \frac{\norm{e_n-T^*(e_n)}^2}{4}P_{e_n-T^*(e_n)}\\&+i\frac{\norm{e_n+iT^*(e_n)}^2}{4}P_{e_n+iT^*(e_n)}-i\frac{\norm{e_n-iT^*(e_n)}^2}{4}P_{e_n-iT^*(e_n)}
\end{align*}
By Lemma \ref{lemma:Adjoint}, $T^*(e_n)$ is a computable vector, so by Corollary \ref{cor:ComputePtProj} each projection operator appearing in the above expression is a computable operator. Thus we have written $P_{e_n}\circ T$ as a linear combination of projections onto computable vectors with computable coefficients.  Summing over $n \in \{n_1, \ldots, n_K\}$ completes the proof.
\end{proof}

We note that the proof of Lemma \ref{lem:FiniteRankApproximation2} is uniform.

\begin{theorem}\label{theorem:EquivalentCompactness}
Suppose that $T$ is a bounded linear operator.  The following are equivalent:
\begin{enumerate}
    \item{$T$ is a computably image-compact operator of $\mathcal{H}^\#$.}
    \item{$T$ is a computable point of $\mathcal{K}(\mathcal{H}^\#)$.}
\end{enumerate}
\end{theorem}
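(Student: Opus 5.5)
We prove the two implications separately.

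For (1) $\Rightarrow$ (2), we apply Lemma \ref{lem:FiniteRankApproximation2}, which is uniform. Given $k$, it produces computable scalars $\lambda_0,\dots,\lambda_n$ and computable vectors $v_0,\dots,v_n$ of $\mathcal{H}^\#$ with $\norm{T-\sum_j\lambda_jP_{v_j}}<2^{-(k+1)}$. By Corollary \ref{cor:ComputePtProj}, each $P_{v_j}$ is a computable point of $\mathcal{K}(\mathcal{H}^\#)$, uniformly in an index for $v_j$. Hence $\sum_j\lambda_jP_{v_j}$ is a computable point, and we can compute a generated point of $\mathcal{K}(\mathcal{H}^\#)$ within $2^{-(k+1)}$ of it. That generated point is within $2^{-k}$ of $T$, and since this works uniformly in $k$, $T$ is a computable point of $\mathcal{K}(\mathcal{H}^\#)$.

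For (2) $\Rightarrow$ (1) there are two things to show: $T$ is a computable operator of $\mathcal{H}^\#$, and $C=\overline{T[\overline{B}(0;1)]}$ is computably compact.

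\emph{Computability of $T$.} A generated point of $\mathcal{K}(\mathcal{H}^\#)$ is a rational $*$-polynomial in finitely many $P_{\rho_j}$. Each $P_{\rho_j}(x)=\norm{\rho_j}^{-2}\langle x,\rho_j\rangle\rho_j$ is a computable operator, uniformly in $j$; here $\rho_j\neq 0$ is decidable enough, since projections onto $0$ can be discarded by convention. Adjoints of these are themselves, and sums and products of computable operators are computable, so generated points are uniformly computable operators with known norms. Since $T$ is a uniform norm limit of such operators with a computable rate, $T(x)$ can be approximated within $2^{-k}$ for any computable $x$ in the unit ball, and linearity handles scaling.

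\emph{Computable compactness of $C$.} This is the main obstacle. I would show that $C$ is c.e. closed and that finite $2^{-k}$-nets of it can be computed. Given $k$, first pick a generated point $S$ with $\norm{T-S}<2^{-k-2}$ and expand it as a finite-rank operator. Each $P_{\rho_j}$ has range $\mathrm{span}\{\rho_j\}$, so the range of $S$ lies in the span $V$ of finitely many explicit rational vectors. Then $S[\overline{B}(0;1)]$ is a bounded subset of the finite-dimensional space $V$, of radius at most the computable number $\norm{S}$. The set $\overline{S[\overline B(0;1)]}$ is the image of the unit ball of the finite-dimensional space $\mathrm{span}\{\rho_j\}$ (the relevant domain, since $S$ factors through the inner products with the $\rho_j$ and their adjoint structure), which is computably compact. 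Its image under the computable map $S$ is therefore computably compact by \cite[Lemma 3.31]{DowneyMelnikov}, and from this we compute a finite $2^{-k-2}$-net of rational vectors.

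Since $C$ and $\overline{S[\overline B(0;1)]}$ are within Hausdorff distance $2^{-k-2}$, this yields a $2^{-k}$-cover of $C$ by rational balls, each of which meets $C$. Because the nets are generated uniformly in $k$, we can also enumerate all rational balls meeting $C$, so $C$ is c.e. closed. Together, effective total boundedness and c.e. closedness give computable compactness in the sense of \cite{EMST}.
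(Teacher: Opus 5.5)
Your strategy is the paper's: (1)$\Rightarrow$(2) via Lemma~\ref{lem:FiniteRankApproximation2} and Corollary~\ref{cor:ComputePtProj}, and (2)$\Rightarrow$(1) by approximating $T$ by a generated finite-rank point $S$, writing $S[\overline{B}(0;1)]$ as the image of a finite-dimensional unit ball, and transferring covers. You are also more careful than the paper about the computability of $T$ as an operator, c.e.\ closedness, and the radii. The gap is at the step you flagged as the main obstacle. Your observation that $S$ vanishes on $V^\perp$, for $V=\mathrm{span}\{\rho_{j_0},\dots,\rho_{j_m}\}$, is correct, since every monomial ends in some $P_{\rho_j}$. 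But the closed unit ball $B_V$ is computably compact only \emph{non-uniformly} in the $\rho_{j_i}$, while computable compactness of $C$ needs your nets uniformly in $k$. Linear (in)dependence of rational vectors is not decidable, and $B_V$ jumps in the Hausdorff metric when the dimension drops. For example, take a computable presentation whose special points are $e_0,e_1,e_2,\dots$ together with the vectors $e_0+c_ne_1$, where $c_n=2^{-t}$ if the $n$th machine halts at stage $t$ and $c_n=0$ if it never halts. The unit ball of $\mathrm{span}\{e_0,e_0+c_ne_1\}$ contains $e_1$ if the machine halts and is at distance $1$ from $e_1$ otherwise. So no algorithm produces $\tfrac14$-nets (finite sets within Hausdorff distance $\tfrac14$) of these balls uniformly in $n$. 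Since the approximants $S_k$ supplied by a name of $T$ are arbitrary generated points, your argument gives a computable net for each $k$ separately, not a computable sequence of nets. (The sets $S[\overline{B}(0;1)]$ themselves are in fact uniformly computably compact, so the conclusion survives; it is the passage through $B_V$ that loses uniformity.)

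What you need is a finite-dimensional domain that comes with a computable orthonormal basis. The paper obtains this by writing the approximant as $\sum_j\lambda_jP_{v_j}$ with orthonormal $v_j$ and citing \cite[Proposition 3.7]{McNicholl.2020}. However, a generated point such as $P_{\rho_0}P_{\rho_1}$ need not be normal, so that form is not literally available. A clean repair uses tools already in the paper. Fix a computable orthonormal basis $(e_n)$ (Fact~\ref{fact:ONB}) and the presentation $\calH^\dagger$ from the proof of Proposition~\ref{prop:PresentationOfCompacts}. By Lemma~\ref{lem:IdentityMapLift}, $T$ is a computable point of $\mathcal{K}(\calH^\dagger)$. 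So you may take each $S_k$ to be a rational $*$-polynomial without constant term in projections $P_\xi$ with $\xi\in E_N:=\mathrm{span}\{e_0,\dots,e_N\}$, where $N$ is computed from $k$; such an $S_k$ is uniformly a computable operator of $\calH^\#$. By your own observation, $S_k$ vanishes on $E_N^\perp$, so $S_k[\overline{B}(0;1)]=S_k[B_{E_N}]$. Moreover, $B_{E_N}$ is the image of the closed unit ball of $\mathbb{C}^{N+1}$ under the computable isometry $c\mapsto\sum_{i\le N}c_ie_i$, hence computably compact uniformly in $N$. With this change, the rest of your argument goes through: the Hausdorff-distance transfer, c.e.\ closedness, and the final appeal to effective total boundedness.
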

\begin{proof}
The direction (1) implies (2) is immediate from Lemma \ref{lem:FiniteRankApproximation2} and the definition of $\mathcal{K}(\mathcal{H}^\#)$.

For the (2) implies (1) direction, fix $k \in \mathbb{N}$.  Compute a generated point $g$ of $\mathcal{K}(\mathcal{H}^\#)$ so that $\norm{g-T} < 2^{-(k+1)}$.  By definition of the presentation $\mathcal{K}(\mathcal{H}^\#)$, it is possible to compute scalars $\lambda_1, \ldots, \lambda_n$ and $\mathcal{H}^\#$-indices of orthonormal vectors $v_0, \ldots, v_n$ such that $g = \sum_{j=1}^n \lambda_jP_{v_j}$.  Let $X = \operatorname{span}(v_1, \ldots, v_n)$.  It follows that $g[\overline{B}(0_{\mathcal{H}}; 1)] = g[C]$, where $C$ is the closed unit ball of $X$.  By \cite[Proposition 3.7]{McNicholl.2020}, $C$ is computably compact.  Since computable compactness is preserved by computable functions, $\overline{g[\overline{B}(0_{\mathcal{H}}; 1)]}$ is computably compact.

It is thus possible to compute rational open balls $B(x_1; 2^{-(k+1)}), \ldots, B(x_r; 2^{-(k+1)})$ in $X$ so that $\overline{g[\overline{B}(0_{\mathcal{H}}; 1)]} \subseteq \bigcup_{j=1}^rB(x_r; 2^{-(k+1)})$ and so that $B(x_j; 2^{-(k+1)}) \cap g[\overline{B}(0_{\mathcal{H}}; 1)] \neq \emptyset$.  Then $\overline{T[\overline{B}(0_{\mathcal{H}}; 1)]} \subseteq \bigcup_{j=1}^rB(x_j; 2^{-(k+1)})$, so $T$ is computably image-compact.
\end{proof}

Brattka and Dillhage, \cite[Corollary 19]{BrattkaDillhage.2005}, showed that (in our terminology) computable points of $\mathcal{K}(\mathcal{H}^\#)$ have computably compact spectra (this result also follows directly from the combination of our Theorem \ref{theorem:ComputableCompactSpectrumNonunital} and Proposition \ref{prop:PresentationOfCompacts}).  We thus also have the following.

\begin{corollary}
If $T$ is a computably image-compact normal operator of $\mathcal{H}^\#$, then $\sigma(T)$ is computably compact.
\end{corollary}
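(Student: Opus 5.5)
The plan is to chain together results already established. Let $T$ be a computably image-compact normal operator of $\mathcal{H}^\#$. By the direction (1) implies (2) of Theorem \ref{theorem:EquivalentCompactness}, $T$ is a computable point of $\mathcal{K}(\mathcal{H}^\#)$. Normality is a purely algebraic property, $TT^* = T^*T$, so $T$ is a normal computable point of $\mathcal{K}(\mathcal{H}^\#)$.

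Next I need a computable presentation of the ambient \cstar-algebra, so that Theorem \ref{theorem:ComputableCompactSpectrumNonunital} applies. The definition of computable image-compactness requires $T$ to be a computable operator of $\mathcal{H}^\#$. I will therefore assume, as is implicit throughout this subsection, that $\mathcal{H}^\#$ is a computable presentation. Proposition \ref{prop:PresentationOfCompacts} then gives that $\mathcal{K}(\mathcal{H}^\#)$ is a computable presentation of $\mathcal{K}(\mathcal{H})$. Applying Theorem \ref{theorem:ComputableCompactSpectrumNonunital} to the normal computable point $T$ shows that $\sigma_{\mathcal{K}(\mathcal{H})}(T)$ is a computably compact subset of $\mathbb{C}$.

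The one point needing care is which spectrum the statement refers to: $\sigma(T)$ as an operator in $B(\mathcal{H})$, or $\sigma_{\mathcal{K}(\mathcal{H})}(T)$. The two are related through the unitization.
\begin{itemize}
\item If $\mathcal{H}$ is infinite-dimensional, $\mathcal{K}(\mathcal{H})$ is non-unital, so $\sigma_{\mathcal{K}(\mathcal{H})}(T) = \sigma_{\mathcal{K}(\mathcal{H})_1}(T)$. Since $\mathcal{K}(\mathcal{H})_1 \cong C^*(\mathcal{K}(\mathcal{H}), 1) \subseteq B(\mathcal{H})$, spectral permanence for \cstar-subalgebras identifies this with $\sigma_{B(\mathcal{H})}(T)$.
\item If $\mathcal{H}$ is finite-dimensional, then $\mathcal{K}(\mathcal{H}) = B(\mathcal{H})$ and the spectra agree outright.
\end{itemize}
In either case $\sigma(T)$ is computably compact, and these identifications are classical facts that need no effective content. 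Alternatively, one could bypass this discussion by citing \cite[Corollary 19]{BrattkaDillhage.2005} directly once (2) is obtained from Theorem \ref{theorem:EquivalentCompactness}.
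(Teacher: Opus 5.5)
Your proof is correct and follows essentially the same route as the paper: Theorem \ref{theorem:EquivalentCompactness} turns computable image-compactness into computability in $\mathcal{K}(\mathcal{H}^\#)$, and then either \cite[Corollary 19]{BrattkaDillhage.2005} or Theorem \ref{theorem:ComputableCompactSpectrumNonunital} with Proposition \ref{prop:PresentationOfCompacts} gives the result. The paper leaves implicit both the computability of $\mathcal{H}^\#$ and the identification of $\sigma_{\mathcal{K}(\mathcal{H})}(T)$ with $\sigma_{B(\mathcal{H})}(T)$; you state the first as an assumption and handle the second correctly via the unitization and spectral permanence.
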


We conclude with an effective version of the Spectral Theorem for compact normal operators.  To simplify notation, when $\lambda$ is an eigenvalue of an operator we let $P_\lambda$ denote the projection onto the corresponding eigenspace.  In light of Theorem \ref{theorem:EquivalentCompactness} and the existing terminology used in \cite{BrattkaDillhage.2007}, we use the term ``computably compact operator" instead of ``computably image-compact operator".

\begin{theorem}\label{thm:Spectral}
Suppose that $T$ is a computably compact normal operator of $\mathcal{H}^\#$.  Then:
\begin{enumerate}
\item{Every eigenvalue of $T$ is computable.}
\item{From an index of a non-zero eigenvalue $\lambda$ of $T$, it is possible to compute an $\mathcal{H}^\#$-index of $P_\lambda$.}
\item{From $k \in \mathbb{N}$ it is possible to compute an index of a finite set $F$ of non-zero eigenvalues of $T$ so that $\norm{T - \sum_{\lambda \in F}\lambda P_\lambda} < 2^{-k}$.}
\end{enumerate}
\end{theorem}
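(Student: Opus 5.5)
By Theorem \ref{theorem:EquivalentCompactness}, $T$ is a computable point of $\mathcal{K}(\mathcal{H}^\#)$. By Proposition \ref{prop:PresentationOfCompacts} this presentation is computable (when $\mathcal{H}^\#$ is computable). So we may apply Theorem \ref{theorem:ComputableCompactSpectrumNonunital} and Corollary \ref{corollary:ApplyingFuncCalc} in the non-unital algebra $\mathcal{K}(\mathcal{H})$. Both are uniform in the non-unital case.

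For (1), recall the classical facts: $\sigma(T)\setminus\{0\}$ consists of isolated eigenvalues, and $0$ is the only possible accumulation point. Each eigenvalue $\lambda$ of $T$ is either $0$ or an isolated point of the computably compact set $\sigma(T)$. Both $0$ and isolated points of computably compact subsets of $\mathbb{C}$ are computable. For an isolated point, fix nonuniformly a rational ball $B$ with $B\cap\sigma(T)=\{\lambda\}$. Then, using computable compactness, we enumerate finite covers of $\sigma(T)$ by rational balls of radius $2^{-m}$ and keep those meeting $\overline{B}$. Such a ball must intersect $\sigma(T)$; if necessary we use c.e.-closedness, which follows from computable compactness for subsets of $\mathbb{C}$. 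This locates $\lambda$ to within $2^{-m+1}$.

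For (2), take a nonzero eigenvalue $\lambda$ given by an index. We first compute some $r>0$ with $\overline{B}(\lambda;2r)\cap\sigma(T)=\{\lambda\}$ and $2r<|\lambda|$. This is the step I expect to be the main obstacle, because an index of $\lambda$ alone does not obviously determine an isolating radius. I would try the following. Since $\lambda\neq0$, compute a rational $s$ with $0<s<|\lambda|/2$. The part of $\sigma(T)$ outside $B(0;s)$ is finite and consists of isolated points. Using computable compactness, search for $m$ and a finite cover by balls of radius $2^{-m}$ of $\sigma(T)\setminus B(0;s)$ (a computably compact set, obtained by the ball-removal trick used in Theorem \ref{theorem:ComputableCompactSpectrumNonunital}) whose connected components each have diameter small relative to their mutual separation. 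Such $m$ exists because that set is finite, and the component containing $\lambda$ then yields $r$. Next define a computable continuous $h$ on $\mathbb{C}$, piecewise linear in $|z-\lambda|$, equal to $1$ on $\overline{B}(\lambda;r)$ and $0$ off $B(\lambda;2r)$. Then $h(0)=0$, and $h|_{\sigma(T)}=\chi_{\{\lambda\}}$. By Corollary \ref{corollary:ApplyingFuncCalc}, $h(T)=P_\lambda$ is a computable point of $\mathcal{K}(\mathcal{H}^\#)$, uniformly. I read ``$\mathcal{H}^\#$-index of $P_\lambda$'' as an index in $\mathcal{K}(\mathcal{H}^\#)$, or equivalently, via Theorem \ref{theorem:EquivalentCompactness}, a code as a computable operator.

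For (3), given $k$, let $\epsilon=2^{-k-1}$. Define a computable $g$ with $g(z)=z$ for $|z|\ge 2\epsilon$, $g(z)=0$ for $|z|\le\epsilon$, and linear in $|z|$ in between, so that $|g(z)-z|\le 2\epsilon$. Then $\norm{T-g(T)}\le\sup_{\sigma(T)}|z-g(z)|<2^{-k}$, after adjusting constants. However, $g(T)=\sum_{\lambda\in\sigma(T),|\lambda|>\epsilon}g(\lambda)P_\lambda$ is not of the required form. So instead we use computable compactness to find a rational $t\in(\epsilon,2\epsilon)$ such that an annulus $\{t-\delta<|z|<t+\delta\}$ misses $\sigma(T)$. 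This can be certified from a finite cover, and such $t$ exists because only finitely many points of $\sigma(T)$ lie in $\{|z|\ge\epsilon\}$. The finite set $F=\sigma(T)\cap\{|z|\ge t\}$ is then obtained as in (2): cover it by small balls, isolate the clusters, and compute each point as in (1) uniformly, with the separation found by search. Finally $\norm{T-\sum_{\lambda\in F}\lambda P_\lambda}=\sup_{\mu\in\sigma(T),|\mu|<t}|\mu|<2\epsilon$, by the spectral theorem for compact normal operators.
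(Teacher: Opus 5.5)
Your part (1) agrees with the paper. The step you flag in (2), computing an isolating radius $r$ from an index of $\lambda$, is a genuine gap, and it cannot be filled. A cover whose clusters are small relative to their mutual separation does not certify that the cluster containing $\lambda$ holds no other point of $\sigma(T)$: two eigenvalues much closer together than $2^{-m}$ pass your test together.

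In fact no procedure works. Let $W$ be the halting set and let $T$ be diagonal on $\ell^2$ with $Te_{2n}=2^{-n}e_{2n}$, with $Te_{2n+1}=2^{-n}(1+2^{-s})e_{2n+1}$ if $n$ enters $W$ at stage $s\geq 1$, and with $Te_{2n+1}=2^{-n}e_{2n+1}$ if $n\notin W$. The diagonal entries are uniformly computable and tend to $0$ at a computable rate. So $T$ is a self-adjoint computable point of $\mathcal{K}(\ell^2)$, hence computably compact by Theorem~\ref{theorem:EquivalentCompactness}. Each $2^{-n}$ is an eigenvalue with an index computable from $n$. Yet $\langle P_{2^{-n}}e_{2n+1},e_{2n+1}\rangle$ is $1$ if $n\notin W$ and $0$ if $n\in W$, so computing $P_\lambda$ uniformly from $\lambda$ would decide $W$.

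The same example defeats your (3). For every $n$ there are exactly $1$ or $2$ eigenvalues in $[2^{-n},2^{-n+1})$, according as $n\notin W$ or $n\in W$. Membership of a given eigenvalue in this interval is decidable, because no eigenvalue lies in $(\tfrac34\cdot 2^{-n},2^{-n})\cup(\tfrac32\cdot 2^{-n},2^{-n+1})$. The norm condition forces $F$ to contain every eigenvalue of modulus at least $2^{-k}$, in particular all those in these intervals for $n\leq k$. So a repetition-free $F$ computed from $k$ would decide $W\cap\{0,\dots,k\}$.

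The paper's proof does not supply the missing step either. For (2) it uses exactly your idea, Corollary~\ref{corollary:ApplyingFuncCalc} applied to $\chi_{\{\lambda\}}$. But $\chi_{\{\lambda\}}$ is computable on $\sigma(T)\cup\{0\}$ only relative to an isolating radius, so what is really proved is that each $P_\lambda$ is a computable point, uniformly in $\lambda$ together with such a radius. With $r$ fixed nonuniformly, your $h$ gives precisely this. For (3) the paper searches finite sets of terms of a computable dense sequence in $\sigma(T)$. It assumes $P_\lambda$ is computable uniformly in $\lambda$, citing Corollary~\ref{cor:ComputePtProj}, which concerns $P_v$ for vectors $v$; the example above refutes that uniformity.

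Your annulus step is the part of (3) that survives. By computable compactness, a search finds a rational $t\in(2^{-k-1},2^{-k})$ and $\delta>0$ with $\{t-\delta\leq\abs{z}\leq t+\delta\}\cap\sigma(T)=\emptyset$. Take $h$ equal to $0$ on $\abs{z}\leq t-\delta$ and $1$ on $\abs{z}\geq t+\delta$. Then Corollary~\ref{corollary:ApplyingFuncCalc} makes $Q=h(T)=\sum_{\abs{\mu}>t}P_\mu$ computable. Hence $TQ=\sum_{\abs{\mu}>t}\mu P_\mu$ is computable uniformly in $k$, with $\norm{T-TQ}\leq t-\delta<2^{-k}$. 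The eigenvalues of modulus greater than $t$ can then be listed with multiplicity by diagonalizing the compression of $T$ to the finite-dimensional range of $Q$. By the example, they cannot be listed without repetition.
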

\begin{proof}
(1): Suppose that $\lambda$ is an eigenvalue of $T$.  If $\lambda = 0$ then $\lambda$ is computable.  Otherwise, $\lambda$ is an isolated point of the computably compact set $\sigma(T)$ (see \cite[Theorem 1.4.11]{Murphy.1990}), and is therefore computable.

(2): Since $\lambda$ is isolated in $\sigma(T) \cup \{0\}$, the characteristic function $\chi_{\lambda}$ of $\{\lambda\}$ is a computable function from $\sigma(T) \cup \{0\}$ to $\mathbb{C}$, so the result follows from Corollary \ref{corollary:ApplyingFuncCalc}, using the presentation $\mathcal{K}(\mathcal{H}^\#)$ and the fact that $P_\lambda = \chi_{\lambda}(T)$.

(3): Being computably compact, $\sigma(a)$ is in particular c.e. closed, and therefore contains a computable dense sequence of points $(\lambda_n)_{n \in \mathbb{N}}$ (see \cite[Lemma 3.27]{DowneyMelnikov}).  Since every point of $\sigma(a)$ (except possibly $0$) is isolated, every non-zero eigenvalue of $T$ occurs (possibly repeatedly) in this dense sequence.  Given a finite set $F \subseteq \mathbb{N}$ we can compute $\norm{T - \sum_{\lambda \in F}\lambda P_\lambda}$ because each $P_\lambda$ is computable, uniformly in $\lambda$ (Corollary \ref{cor:ComputePtProj}), so we can search through finite subsets of $\mathbb{N}$ to find an $F$ such that $\norm{T - \sum_{\lambda \in F}\lambda P_\lambda} < 2^{-k}$.  Since the classical Spectral Theorem tells us that $T = \sum_{n \in \mathbb{N}}\lambda P_\lambda$, this search terminates.
\end{proof}


\bibliographystyle{amsplain}
\bibliography{paperbib}

\end{document}